\documentclass[a4paper]{amsart}
\usepackage{graphicx}
\usepackage{amssymb} 
\usepackage{mathtools}
\usepackage{stmaryrd}
\usepackage{mathrsfs}
\usepackage{dsfont}
\usepackage[mathcal]{euscript}
\usepackage{tikz}
\usepackage{tikz-cd}
\tikzset{labl/.style={anchor=south, rotate=270, inner sep=.5mm}}
\usepackage{hyperref}
\hypersetup{%
  bookmarksnumbered=true,%
  colorlinks=true,%
  linkcolor=blue,%
  citecolor=blue,%
  filecolor=blue,%
  menucolor=blue,%
  urlcolor=blue,%
  bookmarksopen=true,%
  bookmarksdepth=2,%
  pageanchor=true}

\title{Ordnung muss sein}

\author{Henning Krause}
\address{Fakult\"at f\"ur Mathematik\\
Universit\"at Bielefeld\\ D-33501 Bielefeld\\ Germany}
\email{hkrause@math.uni-bielefeld.de}

\theoremstyle{plain}
\newtheorem{thm}{Theorem}[section]
\newtheorem{prop}[thm]{Proposition}
\newtheorem{lem}[thm]{Lemma} 

\newtheorem{cor}[thm]{Corollary}

\theoremstyle{definition}
\newtheorem{defn}[thm]{Definition}
\newtheorem{exm}[thm]{Example}

\theoremstyle{remark}
\newtheorem{rem}[thm]{Remark}

\numberwithin{equation}{section}

\newcommand{\card}{\operatorname{card}}

\newcommand{\colim}{\operatorname*{colim}}

\renewcommand{\dim}{\operatorname{dim}}

\newcommand{\End}{\operatorname{End}}

\newcommand{\Ext}{\operatorname{Ext}}

\newcommand{\fl}{\operatorname{fl}}

\newcommand{\Fun}{\operatorname{Fun}}

\newcommand{\Hom}{\operatorname{Hom}}

\newcommand{\Ind}{\operatorname{Ind}}

\renewcommand{\mod}{\operatorname{mod}}

\newcommand{\Ob}{\operatorname{Ob}}

\newcommand{\proj}{\operatorname{proj}} 

\newcommand{\ptfl}{\operatorname{ptfl}}

\newcommand{\rad}{\operatorname{rad}}

\newcommand{\rep}{\operatorname{rep}}

\newcommand{\Ab}{\mathrm{Ab}}

\newcommand{\op}{\mathrm{op}}

\newcommand{\two}{\mathbf{2}}

\newcommand{\iso}{\xrightarrow{\raisebox{-.4ex}[0ex][0ex]{$\scriptstyle{\sim}$}}}

\newcommand{\longiso}{\xrightarrow{\ \raisebox{-.4ex}[0ex][0ex]{$\scriptstyle{\sim}$}\ }}

\newcommand{\lto}{\longrightarrow}

\newcommand{\xto}{\xrightarrow}
\newcommand*{\intref}[2]{\def\tmp{#1}\ifx\tmp\empty\hyperref[#2]{\ref*{#2}}\else\hyperref[#2]{#1~\ref*{#2}}\fi}

\def\A{\mathcal A} 
\def\B{\mathcal B} 
\def\C{\mathcal C}

\def\calS{\mathcal S}

\def\bbC{\mathbb C} 
 
\def\bbK{\mathbb K}

\def\bbQ{\mathbb Q} 
\def\bbR{\mathbb R}

\newcommand{\frf}{\mathfrak{f}}

\newcommand{\frt}{\mathfrak{t}}

\def\a{\alpha}

\def\e{\varepsilon}

\def\p{\phi}
\def\s{\sigma}

\def\Ga{\Gamma}
\def\La{\Lambda}

\def\Om{\Omega}

\begin{document}

\keywords{Length category, ext-quiver, centre, partially ordered set,
  linear representation, incidence algebra, distributive object}

\subjclass[2020]{16G20 (primary); 18E10 (secondary)}

\date{\today}

\maketitle

\begin{center}
  \includegraphics[height=40mm]{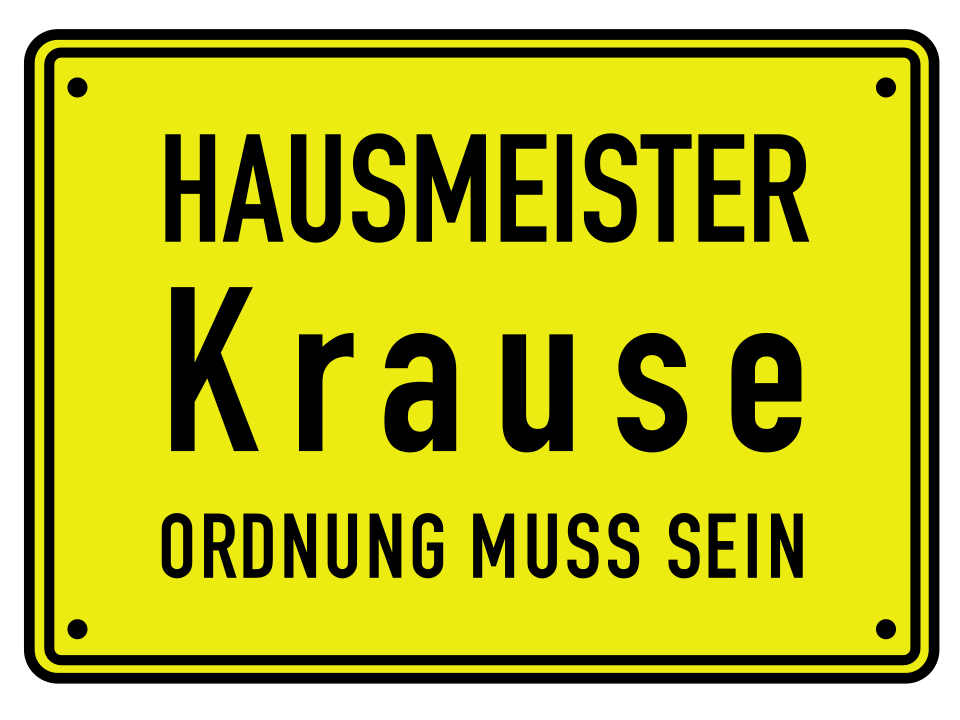}
\end{center}
\bigskip
\begin{abstract}
  For any length category, we establish a set of rules (necessary and
  sufficient) that ensure a partial order on the isomorphism classes
  of simple objects such that the category is equivalent to the
  category of finite dimensional representations of this partially
  ordered set. Equivalently, we characterise the length categories
  that arise as categories of modules over a sheaf of division rings
  on a finite $T_0$-space.
\end{abstract}

\section{Introduction}

A \emph{length category} is an abelian category such that any chain of
subobjects for a given object is finite and the isomorphism classes of
objects form a set \cite{Ga1973}. There are two basic invariants: the
\emph{centre}, which is the ring of natural transformations from the
identity functor to itself, and the \emph{ext-quiver}, for which the
vertices are given by the isomorphism classes of simple objects, and
there is an arrow $[S]\to [T]$ if $\Ext^1(T,S)\neq 0$. If there are no
oriented cycles, then we set $[S]\le [T]$ if there is a path from
$[S]$ to $[T]$ in the ext-quiver and this yields a partial order.
The following result describes a
class of length categories that are completely determined by these
invariants.\footnote{Following the title of the paper, we may refer to
  conditions (L1)--(L3) as `Hausordnung', cf.\
  \url{https://de.wikipedia.org/wiki/Hausmeister_Krause_-_Ordnung_muss_sein}.}

For an abelian category $\A$ we write $Z(\A)$ for its centre and say
that an endomorphism of an object $X$ is \emph{central} if it lies in
the image of the canonical map $Z(\A)\to\End(X)$. The category $\A$ is
\emph{connected} if $0$ and $1$ are the only idempotents in
$Z(\A)$.

\begin{thm}\label{th:main}
  A connected length category is equivalent to the category of
  pointwise finite dimensional representations of a poset over a field
  if and only if the following hold:
  \begin{enumerate}
  \item[(L1)] The ext-quiver  has no oriented cycle.
  \item[(L2)] There exists an object $M$ with a distributive lattice of
    subobjects such that each object is a subquotient of a finite
    direct sum of copies of $M$.
  \item[(L3)] For each simple object all its endomorphisms are central. 
\end{enumerate}
In this case the field equals the centre of the category and the poset
is given by the isomorphism classes of simple objects.
\end{thm}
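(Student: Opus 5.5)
We argue the two directions separately. Throughout, a representation of a poset $P$ over a field $k$ is a functor $P\to\mod k$, and pointwise finite dimensional representations are meant; we assume $P$ finite, as forced by (L2).

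\emph{Necessity.} Let $\A=\rep(P,k)$ with $P$ a finite connected poset. The simple objects are the $S_x$ for $x\in P$, with $\End(S_x)=k$, and $Z(\A)=k$ because $P$ is connected. Hence every endomorphism of a simple is a scalar and so central, which gives (L3). A standard computation gives $\Ext^1(S_y,S_x)\neq 0$ only if $x<y$ is a covering relation (or only if $x<y$, depending on orientation conventions). So paths in the ext-quiver increase strictly in $P$, and (L1) holds. For (L2) take $M=\bigoplus_{x}P_x$, the sum of the representable functors $k[\Hom(x,-)]$, which has dimension one at each $y\ge x$. A subobject of $P_x$ is the same thing as an up-set of $\{y: y\ge x\}$. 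So $P_x$ is uniserial-free and multiplicity free, and its subobject lattice is distributive. For a direct sum one has to argue more carefully. It is better to take $M=\bigoplus_x P_x$ only if the lattice stays distributive, and it does not in general, since sums of isomorphic pieces give non-distributive lattices. I would therefore choose $M$ to be the \emph{thin} object $M(y)=k$ for all $y$, with identity maps, i.e.\ the constant representation. Its subobjects are up-sets of $P$, which form a distributive lattice. Every $P_x$ is a subobject of $M$, every object is a quotient of a sum of the $P_x$, and hence every object is a subquotient of $M^n$.

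\emph{Sufficiency.} Let $k=Z(\A)$. First we show $k$ is a field. By (L3) and Schur, each $\End(S)$ is a quotient of $Z(\A)$ and a division ring. Connectedness, together with the distributive object, should force all of these maps $Z(\A)\to\End(S)$ to be isomorphisms onto one common field, and $Z(\A)$ itself to be a field. The plan is to use $M$. In a length category with distributive lattice of subobjects, every subquotient is multiplicity free in the sense that the composition factors of $M$ are pairwise non-isomorphic (a repeated factor $S$ yields a sublattice $M_3$ or a non-distributive configuration, using $\Hom$ and $\Ext$ between the copies). Set $P$ to be the set of isomorphism classes of simples, ordered via (L1). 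By (L2) all simples occur in $M$, so $P$ is finite.

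Next build a projective generator. Since $M$ is multiplicity free with distributive lattice, for each simple $S_x$ there is a smallest subobject of some subquotient with top $S_x$. I would define $P_x$ as the largest subquotient of $M$ with simple top $S_x$, and then show it is projective in $\A$. For projectivity one needs: every object is a subquotient of $M^n$, and $\Ext^1(P_x,S_y)=0$. This second condition should follow from the maximality of $P_x$ together with distributivity, which makes extensions inside $M^n$ already visible inside $M$. Then $\A\simeq\mod\Lambda$ with $\Lambda=\End(\bigoplus_x P_x)^{\op}$. Multiplicity freeness gives $\dim\Hom(P_x,P_y)\le 1$ over $\End(S)$, nonzero exactly when $S_y$ occurs in $P_x$, i.e.\ when $y\le x$ (or its opposite). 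Hence $\Lambda$ is an incidence-type algebra over division rings $D_x=\End(S_x)$. Using (L3), all the $D_x$ are images of $k$, and along a nonzero $\Hom(P_x,P_y)$, which is a $D_x$–$D_y$-bimodule of rank one compatible with $k$, they get identified. By connectedness, then, all $D_x=k$ is a field. Finally the composition of nonzero maps $P_x\to P_y\to P_z$ is nonzero, because $\Hom$ is one-dimensional and distributivity rules out cancellation. So $\Lambda$ is the incidence algebra $kP$, up to a cocycle which can be normalised to $1$ since all the maps are embeddings of up-sets. Therefore $\A\simeq\rep(P,k)$.

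\emph{Main obstacle.} The hardest step is constructing the projectives $P_x$ from $M$ and proving that they are projective, i.e.\ extracting enough Ext-vanishing from distributivity of subobjects in $M$ alone. A secondary difficulty is excluding nontrivial twisting (division-ring or cocycle data). (L3) is designed to handle the division-ring part, and the cocycle part needs the multiplicity-freeness.
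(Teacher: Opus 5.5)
\textbf{Verdict: genuine gap.} Your necessity direction is fine once you settle on the constant representation as $M$. The sufficiency direction breaks at exactly the step you call the main obstacle, and the strategy you propose there cannot work. You want the largest subquotient $P_x$ of $M$ with top $S_x$ to be projective from (L1), (L2) and maximality, via Ext-vanishing ``visible inside $M$''. You reserve (L3) for removing division-ring twists afterwards. But (L1) and (L2) alone do not give projectivity.

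Take Ringel's example in the paper: the $\bbR$-algebra $\La$ with two simples, each with endomorphism ring $\bbC$, and a two-dimensional $\Ext^1$ between them. A faithful module $M$ of length two has the following properties:
\begin{enumerate}
\item it is multiplicity free, hence distributive;
\item every module is a subquotient of some $M^n$;
\item the ext-quiver is acyclic.
\end{enumerate}
The largest subquotient of $M$ with the top of $M$ as its top is $M$ itself. Yet $M$ is not projective. Let $S$ be the socle of $M$. The projective cover of the top of $M$ has radical $S\oplus S$, so it has length three, and correspondingly $\Ext^1(M,S)\neq 0$. What fails in this example is precisely (L3). So (L3) must enter the construction of the projectives, not only the final normalisation.

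\textbf{The missing idea: a counting argument, not Ext-vanishing.} The paper proceeds as follows.
\begin{enumerate}
\item Take $N_x=\frt_{\downarrow x}(M)$. The torsion pair for $\Om=\downarrow x\sqcup(\Om\smallsetminus\downarrow x)$ makes $\frt_{\downarrow x}$ exact. Hence every object of $\A_{\downarrow x}$ is a subquotient of some $N_x^n$, and $N_x$ is local with top $S_x$ and $[N_x:S_x]=1$.
\item Acyclicity makes $\End(N_x)$ a division ring, since a nonzero non-invertible endomorphism would produce a cycle through $S_x$.
\item (L3) makes $\End(N_x)\to\End(S_x)$ surjective, hence an isomorphism.
\item Then $\dim_{\End(S_x)}\Hom(N_x,X)\le[X:S_x]$ for all $X$, with equality for $X=N_x$. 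Using the sequences $0\to\Hom(N_x,X')\to\Hom(N_x,X)\to\Hom(N_x,X'')$, equality propagates to all of $\langle N_x\rangle=\A_{\downarrow x}$. This forces $\Hom(N_x,-)$ to be exact on $\A_{\downarrow x}$.
\item Exactness extends to $\A$, because $\Hom(N_x,-)\cong\Hom(N_x,\frt_{\downarrow x}(-))$.
\end{enumerate}
The same isomorphism $\End(N_x)\cong\End(S_x)$ is what justifies your later claim $\dim\Hom(P_x,P_y)\le 1$. It gives $\dim\Hom(N_x,N_y)=[N_y:S_x]$, with the inclusion $N_x\subseteq N_y$ as a basis. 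Compositions are then inclusions, so the incidence algebra appears on the nose, and your appeal to ``no cancellation'' and cocycle normalisation becomes unnecessary.

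\textbf{Two smaller points.}
\begin{enumerate}
\item Your claim that distributive objects are multiplicity free is false without (L1); the regular module over $k[\e]$ is a counterexample. You need to invoke acyclicity there, since comparable copies of a simple would create a cycle.
\item For $Z(\A)$ being a field, you only identify the kernels of the maps $Z(\A)\to\End(S_x)$ with one another. You also need this common kernel to vanish. The paper proves that by a minimal-length argument using torsion pairs coming from acyclicity. Alternatively, it follows a posteriori once the equivalence with $\rep(P,K)$ for connected $P$ is established.
\end{enumerate}
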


Here, linear representations of posets are by definition contravariant
functors from the poset (viewed as a category) into the category of
vector spaces.  There is a long tradition of studying such
representations, and we refer to the monograph of Simson for a
detailed account \cite{Si1992}. Intimately related is the study of
incidence algebras and their modules. In fact, the question when a
finite dimensional algebra is an incidence algebra has been
investigated by several authors \cite{BM1979,Fe1977,IK2022}. In
particular, the relevance of distributive modules has been noticed,
but the more general context of length categories seems to be new.

Linear representations of finite posets over a field are
nothing but sheaves on finite $T_0$-spaces with values
in the category of vector spaces; see Remark~\ref{re:sheaves}. This
yields another perspective on the class of length categories arising
in Theorem~\ref{th:main}, which is reminiscent of Gabriel's
reconstruction of a noetherian scheme from its category of
quasi-coherent sheaves \cite[VI]{Ga1962}.

The lattice of subobjects of a finite length object is actually finite
when it is distributive; thus Birkhoff's representation theorem
applies. Roughly speaking, it says that a finite distributive lattice
can be reconstructed from its join-irreducible elements; see
Remark~\ref{re:birkhoff}.  The following analogue of Birkhoff's
theorem is a refinement of Theorem~\ref{th:main}, because we allow the
endomorphism rings of the simple objects to vary.

\begin{thm}\label{th:birkhoff}
  Let $\A$ be a length category and $M\in\A$ a distinguished object satisfying the following:
  \begin{enumerate}
  \item[(M1)] The ext-quiver  has no oriented cycle.
  \item[(M2)] The object $M$ has a distributive lattice of
    subobjects and each object is a subquotient of a finite
    direct sum of copies of $M$.
  \item[(M3)]  For each join-irreducible subobject $N\subseteq M$ the
    canonical map 
    $\End(N)\to\End(N/\rad N)$ is surjective.
\end{enumerate}
Then the set $(M_x)_{x\in\Om}$ of join-irreducible subobjects of $M$
forms a representative set of indecomposable projective objects of
$\A$ and the simple objects are up to isomorphism of the form $S_x=M_x/\rad M_x$.
Moreover, the assignment
\[X\longmapsto \bigoplus_{x\in\Om}\Hom(M_x,X)\] induces an equivalence
$\A\iso\mod A$, where $\mod A$ denotes the category of finitely
presented modules over
$A=\bigoplus_{x,y\in\Om}\Hom(M_x,M_y)$, and
\[M_x\subseteq M_y\quad\iff\quad\Hom(M_x,M_y)\neq 0\quad\iff\quad
  [S_x]\le [S_y].\]
\end{thm}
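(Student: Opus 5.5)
The plan is to show first that every join-irreducible $M_x\subseteq M$ is projective in $\A$ with simple top $S_x:=M_x/\rad M_x$, and that every simple object arises this way. Then $\bigoplus_x M_x$ is a projective generator, and the equivalence $\A\iso\mod A$ follows from standard Morita theory for length categories.

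\emph{Step 1: local structure of $M_x$.} In a finite distributive lattice a join-irreducible element $N$ has a unique maximal proper subobject $N^-$, namely the join of all subobjects strictly below $N$; this is proper precisely because $N$ is join-irreducible. So every maximal subobject of $M_x$ equals $N^-$, which gives $\rad M_x=M_x^-$ and makes $S_x=M_x/\rad M_x$ simple. Distributivity also gives the key multiplicity-freeness fact: $M$ has no subquotient isomorphic to $S\oplus S$ for a simple $S$, since the lattice of subobjects of $S\oplus S$ is not distributive when $S$ is simple. Hence each simple occurs in at most one composition factor along any chain of subquotients of $M$. The map $x\mapsto [S_x]$ is injective and surjective onto the simples. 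For surjectivity, any simple $S$ is a subquotient of $M^n$, hence of $M$ via a composition series. Choose a minimal subobject $N\subseteq M$ that has $S$ as a composition factor; then $N$ is join-irreducible with top $S$, using uniqueness of the multiplicity. Injectivity comes from comparing the positions of the unique factors.

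\emph{Step 2: projectivity (the main obstacle).} I must show $\Ext^1(M_x,Y)=0$ for all $Y$. Since every object is a subquotient of some $M^n$, it suffices by dévissage to show $\Ext^1(M_x,S_y)=0$ for every simple $S_y$. Take an extension $0\to S_y\to E\to M_x\to 0$. Because the ext-quiver is acyclic (M1), I argue by induction along the partial order. Any extension of $M_x$ by $S_y$ must be realised inside some $M^n$, or in a quotient of it, and distributivity there forces splitting. Concretely, the composition factors of $M_x$ other than $S_x$ lie strictly below $[S_x]$. A nonsplit extension with $[S_y]$ arrowing into the factors of $M_x$ would give a nonsplit $E$ with a simple top. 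I then realise $E$ as a subquotient of $M^n$, and the multiplicity-freeness of $M$ combined with the maximality of $M_x$ among subobjects with top $S_x$ yields a contradiction. The surjectivity condition (M3) enters here: it lets me lift endomorphisms of $S_x$ to $M_x$, so that the $\End(S_x)$-linear structure of $\Ext^1(S_x,-)$ is controlled by $M_x$. Without (M3), copies of $M_x$ in $M^n$ twisted by automorphisms of $S_x$ could produce nonsplit extensions. This is the step where I expect the real work, and I would first try the pullback argument inside $M^n$ with a diagonal-type embedding of $M_x$.

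\emph{Step 3: conclusion.} With $P=\bigoplus_x M_x$ a projective generator of finite length, the functor $\Hom(P,-)$ gives $\A\iso\mod A$ where $A=\End(P)$. This is Gabriel's standard description. For the order statement, note three things:
\begin{itemize}
\item a nonzero map $M_x\to M_y$ between subobjects of the distributive $M$ has image a join-irreducible with top $S_x$, which forces $M_x\subseteq M_y$ by multiplicity-freeness;
\item conversely, an inclusion $M_x\subseteq M_y$ is itself a nonzero map, and $\Hom(M_x,M_y)\neq 0$ means $S_x$ is a composition factor of $M_y$;
\item the composition factors of the projective cover $M_y$ of $S_y$ are exactly the $S_x$ with a path from $[S_x]$ to $[S_y]$, using Loewy layers and $\Ext^1$ between simples.
\end{itemize}
Together these give $[S_x]\le[S_y]$.
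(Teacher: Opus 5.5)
\textbf{There is a genuine gap.} The heart of the theorem, projectivity of $M_x$ (your Step~2), is not proved. What you write says where an argument should go, and the one concrete mechanism you name is unavailable. For $n\ge 2$ the object $M^n$ is not distributive, since it has subquotients $S\oplus S$, so ``distributivity there forces splitting'' has no content. The ``maximality of $M_x$'' is also never turned into a contradiction. Your instinct about where (M3) enters is right, but an actual argument is needed.

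The paper's argument runs as follows.
\begin{enumerate}
\item Identify $M_x$ with $\frt_{\downarrow x}(M)$.
\item Since $\Ext^1(S_y,S_z)\neq 0$ forces $z\le y$, the functor $\frt_{\downarrow x}$ is exact (Lemma~\ref{le:torsion}). Hence every object of $\A_{\downarrow x}$ is a subquotient of some $M_x^n$.
\item By (M1), $\End(M_x)$ is a division ring (Lemma~\ref{le:End}), so (M3) makes $\End(M_x)\to\Ga_x$ an isomorphism.
\item Then $\dim_{\Ga_x}\Hom(M_x,X)\le[X:S_x]$ for all $X$, with equality for $X=M_x$. Equality propagates to finite sums and subquotients, which forces $\Hom(M_x,-)$ to be exact on $\A_{\downarrow x}$ (Lemma~\ref{le:Hom-subquot}).
\item Exactness then holds on $\A$, because $\frt_{\downarrow x}$ is an exact right adjoint of the inclusion.
\end{enumerate}

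Your diagonal idea can in fact be completed into a different proof, but those details are exactly what is missing.
\begin{enumerate}
\item It suffices that every epimorphism $p\colon E\to M_x$ with $E$ local of top $S_x$ is invertible.
\item Realise $E$ as a quotient of a local $V\subseteq M^n$ with top $S_x$.
\item Every projection of $V$ to $M$ is $0$ or a local subobject with top $S_x$, hence equal to $M_x$. So $V\subseteq M_x^n$.
\item The top of $V$ is a simple subobject of $S_x^n$. An element of $\mathrm{GL}_n(\Ga_x)$ moving it into the first summand lifts by (M3) to an automorphism of $M_x^n$.
\item After this change of coordinates, the other projections of $V$ land in $\rad M_x$, which has no composition factor $S_x$, so they vanish.
\item Thus $V\cong M_x$, and comparing lengths along $V\to E\to M_x$ shows that $p$ is an isomorphism.
\end{enumerate}

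\textbf{Error in Step~1.} Your injectivity of $x\mapsto[S_x]$ and your first bullet in Step~3 both rest on it. The absence of subquotients $S\oplus S$ does not give multiplicity-freeness. The module $\bbK[\e]$ over the dual numbers is distributive with $[\bbK[\e]:S]=2$, and both $\e\bbK[\e]$ and $\bbK[\e]$ are join-irreducible with top $S$. You must use (M1) here, as in Lemma~\ref{le:mfree}. Two incomparable local subobjects with top $S$ give a subquotient $S\oplus S$. Two comparable ones put $S$ into the radical of the larger one, which gives an oriented cycle by Lemma~\ref{le:cycle}.

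\textbf{Third bullet of Step~3.} This is not a general fact about projective covers. If the ext-quiver is $a\to b\to c$ and all indecomposable projectives have Loewy length at most two, then $a\le c$, but $S_a$ is not a composition factor of the projective cover of $S_c$. It holds here only because nonzero maps $M_z\to M_w$ are monomorphisms, by your first bullet. Hence each arrow $z\to w$ gives $M_z\subseteq M_w$, and these inclusions compose along paths.
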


Let $P$ be a poset and $\bbK=(K_x,\kappa_{xy})_{x,y\in P}$ a family of
divison rings $K_x$ with homomorphisms $\kappa_{xy}\colon K_y\to K_x$
for $x\le y$ such that $\kappa_{xy}\kappa_{yz}=\kappa_{xz}$ for all $x\le y\le
z$. A \emph{$\bbK$-linear representation} of $P$ is given by vector
spaces $M(x)$ over $K_x$ and $K_y$-linear maps
$M(y)\to\kappa_{xy}^*M(x)$ for $x\le y$ satisfying an obvious
compatibility condition.\footnote{A map $M(y)\to\kappa_{xy}^*M(x)$
  corresponds via adjunction to a map $M(y)\otimes_{K_y}K_x\to M(x)$. Then one requires
  for $x\le y\le z$ that $M(z)\otimes_{K_z}K_x\to M(x)$ equals the
  composite of $M(z)\otimes_{K_z}K_y\to M(y)$ tensored with the
  $K_y$-module $K_x$ and $M(y)\otimes_{K_y}K_x\to M(x)$.} One
may think of $\bbK$ as a sheaf of divison rings and then a
$\bbK$-linear representation is nothing but a sheaf of $\bbK$-modules.

\begin{cor}\label{co:birkhoff}
  A length category is equivalent to the category of pointwise finite
  dimensional representations of a poset together with a family of
  division rings if and only if there is a distinguished object $M$
  satisfying the conditions \emph{(M1)--(M3)}.

  In this case the poset is given by the set $(M_x)_{x\in\Om}$ of
  join-irreducible subobjects of $M$, and $x\le y$ if there is an
  inclusion $i_{xy}\colon M_x\to M_y$. For $x\in\Om$ the divsion ring
  is $K_x=\End(M_x)$ and $\kappa_{xy}\colon K_y\to K_x$ is given by
  $i_{xy}\kappa_{xy}(\a)=\a i_{xy}$ for $\a\in K_y$.
\end{cor}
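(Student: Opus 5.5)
One direction follows from Theorem~\ref{th:birkhoff}, the other from an explicit computation in the representation category. Assume first that $M$ satisfies (M1)--(M3). Theorem~\ref{th:birkhoff} gives an equivalence $\A\iso\mod A$ with $A=\bigoplus_{x,y\in\Om}\Hom(M_x,M_y)$, and we identify $\mod A$ with $\bbK$-linear representations of $\Om$ as follows. Since $\Hom(M_x,M_y)\neq 0$ exactly when $M_x\subseteq M_y$, I first show that for $x\le y$ every nonzero map $M_x\to M_y$ factors as $i_{xy}\a$ with $\a\in K_x=\End(M_x)$. This should be automatic for a subobject of a distributive object with simple top: the image of $f\colon M_x\to M_y\subseteq M$ is a subobject of $M$ with simple top $S_x$. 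Distributivity forces subobjects of $M$ with a given isomorphic top to be equal (distributive modules have no two distinct isomorphic subquotients in "parallel"), so $\operatorname{im} f= M_x$.

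Given this, $\Hom(M_x,M_y)\cong K_x$ as a left $K_x$-module via $\a\mapsto i_{xy}\a$. It is also a right $K_y$-module, and $\kappa_{xy}$ is defined by $i_{xy}\kappa_{xy}(\a)=\a i_{xy}$, which is well defined by the factorisation. Multiplicativity and $\kappa_{xy}\kappa_{yz}=\kappa_{xz}$ follow from $i_{yz}i_{xy}=i_{xz}$ and the injectivity of $i_{xy}$. A finitely presented $A$-module $X$ then gives $K_x$-spaces $\Hom(M_x,X)$ with restriction along $i_{xy}$, a $K_y$-linear map $\Hom(M_y,X)\to\kappa_{xy}^*\Hom(M_x,X)$, and the compatibility conditions follow from the composition rule. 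Conversely, $A$ is precisely the "incidence ring" of $(\Om,\bbK)$: its pieces are $e_yAe_x\cong K_x$ for $x\le y$ and $0$ otherwise, and multiplication is twisted by $\kappa$. Modules over this ring are the same as $\bbK$-representations, and finite presentation corresponds to pointwise finite dimensionality because $\Om$ is finite and each $\Hom(M_x,X)$ has finite length over $K_x$.

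For the converse, let $\A$ be the category of pointwise finite dimensional $\bbK$-representations of a poset $P$. Since a single $M$ must generate everything via subquotients of finite sums, I take $P$ finite; the general case would be handled by restricting to the finite support of objects, as the theorem implicitly requires. The projectives are $P_x$ with $P_x(z)=K_z$ for $z\le x$ and $0$ otherwise, with structure maps $\kappa$. The simples $S_x$ have $\Ext^1(S_y,S_x)\neq 0$ only if $x<y$ (covering relations), so (M1) holds. Take $M=\bigoplus$ over maximal elements? That fails distributivity when two maxima share lower elements. Instead take $M$ to be the "constant sheaf" $M(z)=K_z$ with all maps $\kappa$. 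Its subobjects are exactly the down-sets of $P$, since each $M(z)$ is one-dimensional over $K_z$ and a subrepresentation is determined by its support, which must be downward closed. The subobject lattice is therefore the lattice of down-sets, which is distributive, and its join-irreducibles are the principal down-sets, i.e.\ $P_x$. Every object is a quotient of a finite sum of the $P_x\subseteq M$, hence a subquotient of $M^n$. For (M3), $\End(P_x)=K_x$ by Yoneda and the top of $P_x$ is $S_x$ with $\End(S_x)=K_x$, so the map is an isomorphism.

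I expect the main obstacle to be the factorisation claim in the first step: showing from distributivity alone that every nonzero morphism $M_x\to M_y$ is a monomorphism onto $M_x$ followed by an automorphism. My plan is to use that in a distributive object of finite length the composition factors of each $M_x/\rad M_x$ occur in a uniquely determined position. Concretely, if $U,V\subseteq M$ with $U/\rad U\cong V/\rad V$ simple and $U\neq V$, then $(U+V)/(U\cap V)\cong U/(U\cap V)\oplus V/(U\cap V)$ contains a subquotient $S\oplus S$, contradicting distributivity. So a nonzero map $f\colon M_x\to M_y\subseteq M$, whose image has top $S_x$, must have image $M_x$. Then $f$ is an epimorphism $M_x\to M_x$, hence an automorphism by the length condition.
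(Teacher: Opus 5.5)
Your overall plan matches the paper's. You apply Theorem~\ref{th:birkhoff}, show that $\Hom(M_x,M_y)$ has length one over $K_x$ with generator $i_{xy}$, read off $\kappa_{xy}$ from the bimodule structure, and for the converse use the constant representation $M(z)=K_z$. You work the converse out in more detail than the paper does, and correctly.

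\textbf{The gap.} It is in the step you yourself single out. The claim that distributivity \emph{alone} forces two distinct local subobjects $U\neq V$ of $M$ with isomorphic tops to produce a subquotient $S\oplus S$ is false. Your computation only works when $U$ and $V$ are incomparable. If $U\subsetneq V$, then $U/(U\cap V)=0$ and $(U+V)/(U\cap V)=V/U$, which need not contain $S\oplus S$.

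This comparable case is exactly the one that matters for a nonzero $f\colon M_x\to M_y$. Length rules out $\operatorname{im} f\supsetneq M_x$, but nothing in your argument rules out $\operatorname{im} f\subsetneq M_x$. For instance, take the distributive (uniserial) module $M=\bbK[\e]$ over the dual numbers. Then $f=\e\cdot-$ is a nonzero endomorphism whose image $\rad M$ has the same top as $M$ but differs from $M$. For the same reason, your claimed ``uniquely determined position'' of composition factors in a distributive object fails without further hypotheses.

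\textbf{The repair.} The missing ingredient is (M1), which your factorisation argument never uses. Suppose $\operatorname{im} f\subsetneq M_x$. Since $M_x$ is local, $\operatorname{im} f\subseteq\rad M_x$, so $S_x$ is a composition factor of $\rad M_x$. Lemma~\ref{le:cycle} then gives an oriented cycle at $[S_x]$, contradicting (M1).

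Equivalently, $M$ is multiplicity free by Lemma~\ref{le:mfree}. This is what the paper's route uses, in the form $\dim_{K_x}\Hom(M_x,M_y)=[M_y:S_x]\le 1$, via Lemma~\ref{le:Hom-subquot} and the projectivity of $M_x$.

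The case $y=x$ of your claim says that $K_x=\End(M_x)$ is a division ring. That is Lemma~\ref{le:End}, and it likewise rests on acyclicity; you also need it to know that $\a i_{xy}\neq 0$ for $0\neq\a\in K_y$. With this repair the rest of your argument goes through.

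\textbf{A minor point in the converse.} Finiteness of $P$ is forced, not chosen, and nothing needs to be handled by restricting supports. If $\rep(P,\bbK)$ is a length category, then $P$ is finite, because your constant representation has every $S_x$ as a subquotient. Your justification that ``a single $M$ must generate everything'' is circular at that point, since producing such an $M$ is exactly what you are trying to show.
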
  

There is a generalisation of Theorem~\ref{th:main} which covers
infinite posets. Linear representations of infinite posets arise for
example in persistence theory \cite{Ou2015}. In order to deal with
abelian categories having infinitely many isomorphism classes of
simple objects we introduce the concept of a \emph{pointwise length
  category} which agrees with that of a length category when there are
only finitely many simples. For instance, the category of pointwise
finite dimensional representations of a poset is a pointwise length
category provided the poset is down-finite. We refer to the last
section of this note for details and to Theorem~\ref{th:main2}
for a precise statement that generalises Theorem~\ref{th:main}.

\section{Proofs}

In this section we provide the proofs of our main theorems and this
requires several preparations. Throughout we fix a length category
$\A$. We choose a representative set of simple objects
$(S_x)_{x\in \Omega}$ and set
\[\Ga_x\coloneqq \End(S_x).\]
When the ext-quiver of $\A$ is acyclic then $\Omega$ is partially
ordered via $x\le y$ if there is a path from $S_x$ to $S_y$ in the
ext-quiver of $\A$.

\subsection*{Distributive objects}

For any object $M$ and a simple object
$S$ let $[M:S]$ denote the multiplicity of $S$ in a composition series
of $M$. The object $M$ is \emph{multiplicity free} if  $[M:S]\le 1$
for every simple object $S$.

Recall that an object has a distributive lattice of subobjects if and
only if there is no simple object $S$ such that $S\oplus S$ arises as
a subquotient; see for instance the first exercise in
\cite{Be1991}. Objects with this property are called
\emph{distributive}. An object is \emph{local} if it has a unique
maximal subobject.

\begin{lem}\label{le:cycle}
  Let $M$ be a local object and set $T=M/\rad M$. If $S$ is a
  composition factor of $\rad ^n M$ for some $n\ge 0$ , then there is
  a path of length at least $n$ from $[S]$ to $[T]$ in the ext-quiver.
\end{lem}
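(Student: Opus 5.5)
We argue by induction on $n$. The arrow convention is that $[S]\to[T]$ exists when $\Ext^1(T,S)\neq 0$, that is, when there is a non-split extension $0\to S\to E\to T\to 0$. For $n=0$ there is nothing to prove, since the path of length $0$ from $[S]$ to $[S]$ suffices whenever $S$ is a composition factor of $M$. This is the only consequence needed, because the inductive step will always produce a path of length at least $n$.

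For the inductive step it is convenient to prove a stronger intermediate claim. If $S$ is a composition factor of $\rad^n M$ with $n\ge 1$, then $S$ is a composition factor of $\rad^{n-1}M$ lying below some composition factor $S'$ of $\rad^{n-1}M/\rad^{n}M$. We then show that $[S]\to[S']$ or that there is a path of positive length from $[S]$ to $[S']$. First, however, we treat the top layer directly. The object $M/\rad M = T$ is simple because $M$ is local, and $\rad M/\rad^2 M$ is semisimple. For a simple summand $U$ of $\rad M/\rad^2M$, the quotient $M/\rad^2 M$ is local with top $T$ and socle containing $U$. Pushing out along the projection onto $U$ yields a local object $E$ with $0\to U\to E\to T\to 0$. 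This sequence is non-split since $E$ is local, so $[U]\to[T]$.

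The general step is handled by applying the induction hypothesis to local subobjects. Let $S$ be a composition factor of $\rad^n M$ with $n\ge1$. We use that $\rad^{n}M=\rad(\rad^{n-1}M)$ and that $\rad^{n-1}M$ is a sum of local subobjects $L$, namely those generated by lifts of simple summands $U$ of its top $\rad^{n-1}M/\rad^nM$. We can always take such $L$ to be the image of a projective cover if projectives exist, but more elementarily we take a minimal subobject mapping onto $U$; such an $L$ is local with top $U$. Since $\rad^n M=\sum_L \rad L$, the factor $S$ is a composition factor of $\rad L$ for some $L$. Applying the claim to $L$ at the first level, and inductively within $\rad L$, we obtain a path of length at least $1$ from $[S]$ to $[U]$.

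It remains to connect $[U]$ to $[T]$ by a path of length at least $n-1$. This follows from the induction hypothesis applied to $M$ and $n-1$, since $U$ is a composition factor of $\rad^{n-1}M$. Concatenating the two paths gives a path of length at least $n$.

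The main obstacle is the statement that $S$, as a composition factor of $\rad L$, admits a path of length at least $1$ to $U$. For this we need a cleaner induction: we prove the lemma for all local objects simultaneously by induction on length. Given $S$ in $\rad L$ with $L$ local of top $U$, pick a simple summand $V$ of $\rad L/\rad^2L$ and a local subobject $L'\subseteq \rad L$ with top $V$ containing $S$ as a composition factor. The top-layer argument gives $[V]\to[U]$, and induction on length applied to $L'$ gives a path from $[S]$ to $[V]$. We must also check that the minimal subobject mapping onto a simple is local; this holds by finite length, since any proper subobject of the minimal one maps to zero and hence lies in the kernel, so the kernel is the unique maximal subobject.
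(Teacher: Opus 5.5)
Your base case is wrong as stated. For $n=0$ the lemma asserts a path from $[S]$ to $[T]$ for every composition factor $S$ of $M$. The trivial path at $[S]$ provides this only when $S\cong T$, so this is the real content of the lemma, not a vacuous case. It matters because your last paragraph relies on it: ``induction on length applied to $L'$ gives a path from $[S]$ to $[V]$'' is exactly the case $n=0$ for $L'$, with $S$ an arbitrary composition factor of $L'$.

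The fix is already contained in your argument. Run the induction on length for the statement that every composition factor of a local object $L$ has a path to the top of $L$, of length at least $1$ if it occurs in $\rad L$. The top itself is trivial, and the remaining case is your last paragraph. Your outer induction on $n$ then needs the case $n=0$ only for $U=T$, which really is trivial.

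A smaller point concerns $\rad^{n-1}M=\sum L$. Choose each $L$ minimal with image exactly the summand $U$ in the top, or simply note that every finite length object is the sum of its local subobjects. If the top has repeated summands, minimal subobjects that merely map onto $U$ under the projection can all have the same image in the top, and then they fail to generate.

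With these repairs your proof is correct, but it takes a different route from the paper. The paper stays inside the radical filtration $M^i=\rad^iM$. A simple summand $S$ of $M^i/M^{i+1}$ with $i>0$ lies in $\rad(M^{i-1}/M^{i+1})=M^i/M^{i+1}$. So pushing out the extension $0\lto M^i/M^{i+1}\lto M^{i-1}/M^{i+1}\lto M^{i-1}/M^i\lto 0$ along the projection onto $S$ stays non-split. Hence $\Ext^1(S_1,S)\neq 0$ for some simple summand $S_1$ of $M^{i-1}/M^i$, and descending layer by layer gives a path of length exactly $i\ge n$.

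You only use extensions whose middle term is local of length two, where non-splitness is immediate. The price is the decomposition into local subobjects and a double induction. In fact the length induction alone proves the full lemma: $\rad L=\sum L'$ gives $\rad^nL=\sum\rad^{n-1}L'$, so the induction on $n$ is superfluous.

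The paper's argument is shorter and produces the path through the radical layers of $M$ itself. Yours avoids having to argue non-splitness for an extension of a semisimple, possibly non-simple, object.
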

\begin{proof}
  Set $M^i\coloneqq \rad^i M$ for $i\ge 0$. Then $M^r=0$ for some $r\ge 0$,
  and this yields a filtration
  \[0=M^r\subseteq\cdots \subseteq M^1 \subseteq M^0 =M\] with
  semisimple factors.  The assumption means there is an index $i\ge n$ such that
  $S\subseteq M^{i}/M^{i+1}$. When $i> 0$ 
  there exists a simple object
  $S_1\subseteq M^{i-1}/M^{i}$ and an arrow $[S]\to [S_1]$ in the ext-quiver,
  which comes from the extension
  \[0\lto M^{i}/M^{i+1}\lto M^{i-1}/M^{i+1}\lto M^{i-1}/M^{i}\lto 0.\]
  Inductively this yields a path from $[S]$ to $[T]$ of length $i$.
\end{proof}

\begin{lem}\label{le:mfree}
Every multiplicity free object is distributive. The converse holds
when the ext-quiver has no oriented cycles.
\end{lem}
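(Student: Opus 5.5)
The first assertion follows from the criterion recalled above: an object is distributive if and only if no subquotient has the form $S\oplus S$ with $S$ simple. If $M$ is multiplicity free and $S\oplus S$ were a subquotient $U/V$ of $M$, then $[M:S]\ge [U/V:S]=2$ by Jordan--H\"older, which is impossible. So $M$ is distributive.

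For the converse we assume the ext-quiver has no oriented cycle and that $M$ is distributive. Suppose $[M:S]\ge 2$ for some simple $S$; we aim to produce a subquotient of the form $S\oplus S$. Among all subquotients $U/V$ of $M$ with $[U/V:S]\ge 2$, we choose one of minimal length and call it $X$. Then $X$ is again distributive. We would argue in three steps.

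\emph{Step 1: the top of $X$.} Any proper subobject of $X$ and any proper quotient of $X$ has $S$-multiplicity at most one. Hence $S$ occurs in the top $X/\rad X$, since otherwise $\rad X$ would be a smaller subquotient containing $S$ twice. Dually, $S$ occurs in the socle of $X$.

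\emph{Step 2: reduction to a local object.} If $X/\rad X$ contains a simple factor $T\not\cong S$, we take a maximal subobject $X'\subset X$ with $X/X'\cong T$. Then $[X':S]=[X:S]\ge 2$ with $X'$ shorter, contradicting minimality. So $X/\rad X$ is a direct sum of copies of $S$. If it has at least two summands, then $X/\rad X$ has $S\oplus S$ as a quotient, and we are done. Otherwise $X$ is local with top $S$.

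\emph{Step 3: the local case.} Since $[X:S]\ge 2$, the object $S$ is a composition factor of $\rad X=\rad^1 X$. By Lemma~\ref{le:cycle} with $n=1$, there is a path of length at least $1$ from $[S]$ to $[S]$ in the ext-quiver. This is an oriented cycle, contradicting the hypothesis. Hence $M$ has a subquotient $S\oplus S$, contradicting distributivity, so $M$ is multiplicity free.

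The only real obstacle is arranging the reduction in Step 2 correctly; the acyclicity enters solely through Lemma~\ref{le:cycle}.
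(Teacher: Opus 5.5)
Your proof is correct, but it takes a different route from the paper's. The paper works inside the subobject lattice of $M$. It takes two distinct subobjects $M',M''\subseteq M$, each of minimal length among those admitting an epimorphism onto $S$, so each is local with top $S$, and then splits into cases. If one is contained in the other, $S$ is a composition factor of the radical of the larger one and Lemma~\ref{le:cycle} gives a cycle. If they are incomparable, $S\oplus S$ embeds into $M/(\rad M'+\rad M'')$. You instead pass to a subquotient $X$ of minimal length with $[X:S]\ge 2$. Minimality forces $X/\rad X$ to be a direct sum of copies of $S$, and the same dichotomy reappears as ``the top has at least two copies of $S$'' versus ``$X$ is local with $S$ a composition factor of $\rad X$''. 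Your version gains that every step is a one-line Jordan--H\"older count. You need neither the argument that $[M:S]\ge 2$ yields two \emph{distinct} minimal local subobjects with top $S$, nor the modular-law computation that the images of $M'$ and $M''$ in $M/(\rad M'+\rad M'')$ are two copies of $S$ meeting in zero. The paper leaves both points implicit; for the first, one has to choose the two subobjects from two different positions of $S$ in a composition series. In exchange, the paper stays with actual subobjects of $M$ and exhibits the offending $S\oplus S$ explicitly. Its local subobjects with simple top also anticipate the join-irreducible subobjects $M_x$ used in the proof of Theorem~\ref{th:birkhoff}. One cosmetic remark: your Step~1 is subsumed by Step~2, since $X\neq 0$ forces the isotypic top to contain at least one copy of $S$, and the socle statement is never used, so you can drop both.
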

\begin{proof}
  The first assertion is clear, given the characterisation of
  distributive objects. Now fix an object $M$ and a simple object
  $S$. If $S$ is a composition factor of $M$, then there exists a
  subobject $M'\subseteq M$ with an epimorphism $M'\to  S$. We may
  choose $M'$ of minimal length and then $M'$ is local.
  Now suppose that
  $[M:S]>1$. Then there are two subobjects $M',M''$ with that
  property. If $M''\subseteq M'$, then one obtains a cycle in the
  ext-quiver by Lemma~\ref{le:cycle} since $S$ is a composition factor
  of $\rad M'$. If $M'$ and $M''$ are not
  comparable, then $S\oplus S$ is a subobject of $M/(\rad M'+\rad
  M'')$. Thus $M$ is not distributive.
\end{proof}

\subsection*{Projective covers}

Let $S$ be a simple object and set $\Ga:=\End(S)$. For an object $X$ in
$\A$ let $\langle X\rangle$ denote the full subcategory of $\A$
consisting of all subquotients of finite direct sums of copies of $X$.

\begin{lem}\label{le:Hom-subquot}
  Let $M\to S$ be an essential epimorphism and suppose that the
  canonical map $\End(M)\to\Ga$ is an isomorphism.  For each $X\in\A$
  we have \begin{equation*}\label{eq:Hom-subquot} \dim_\Ga\Hom(M,X)\le
    [X:S].
\end{equation*}    
If equality holds, then it also holds for all objects in $\langle X\rangle$
and $\Hom(M,-)$ is exact on $\langle X\rangle$.
\end{lem}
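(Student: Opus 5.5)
The plan is to argue by induction on the length of $X$, using that $\Hom(M,-)$ is left exact. First note how $\Ga$ acts on $\Hom(M,X)$. Since $\End(M)\to\Ga$ is an isomorphism, $\Hom(M,X)$ is a right $\End(M)$-module, hence a $\Ga$-vector space, and $\dim_\Ga$ makes sense.

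For $X=0$ the inequality is trivial. For $X$ simple, a nonzero map $M\to X$ is an epimorphism. Since $M\to S$ is essential, $M$ is local with top $S$, so every nonzero map $M\to X$ forces $X\cong S$. When $X=S$, $\Hom(M,S)\cong\Hom(S,S)=\Ga$ is one-dimensional, because every map $M\to S$ factors through the top. Thus $\dim_\Ga\Hom(M,X)=[X:S]$ for simple $X$.

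For general $X$, choose an exact sequence $0\to X'\to X\to X''\to 0$ with $X''$ simple. Left exactness gives an exact sequence
\[0\lto\Hom(M,X')\lto\Hom(M,X)\lto\Hom(M,X'')\]
of $\Ga$-spaces. Hence
\[\dim\Hom(M,X)\le\dim\Hom(M,X')+\dim\Hom(M,X'')\le [X':S]+[X'':S]=[X:S],\]
using the inductive hypothesis. The one point to check is that the maps are $\Ga$-linear, which they are since composition on the left commutes with the right action of $\End(M)$.

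For the second part, the key observation is that for any exact sequence $0\to X'\to X\to X''\to 0$, equality for $X$ forces equality for $X'$ and $X''$, and forces $\Hom(M,X)\to\Hom(M,X'')$ to be surjective. The same chain of inequalities (additivity of $[-:S]$, left exactness, and the bound for $X'$ and $X''$) must then consist of equalities throughout. Equality is also preserved under finite direct sums, since both sides are additive. So the class of objects with equality is closed under finite direct sums, subobjects and quotients, hence under subquotients, and contains $\langle X\rangle$ when it contains $X$.

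Exactness on $\langle X\rangle$ follows. Given a short exact sequence in $\A$ whose terms lie in $\langle X\rangle$, the middle term satisfies equality, so by the observation the sequence stays exact on the right after applying $\Hom(M,-)$. Left exactness is automatic. A general exact sequence in $\langle X\rangle$ splits into short exact ones, since $\langle X\rangle$ is closed under images and kernels.

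The main obstacle, mild as it is, is the simple case of the first part. There one must use essentiality of $M\to S$ to see that $M$ is local, so that $\Hom(M,T)=0$ for simple $T\not\cong S$ and $\Hom(M,S)\cong\Ga$. It also requires identifying the $\Ga$-structure correctly via the isomorphism $\End(M)\cong\Ga$.
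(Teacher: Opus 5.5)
Your proposal is correct and follows essentially the same route as the paper: the simple case comes from the essentiality of $M\to S$ (giving $\Hom(S,X)\cong\Hom(M,X)$ for simple $X$), the general inequality comes from induction on length using left exactness of $\Hom(M,-)$, and equality then passes to subquotients, which yields exactness. You simply make explicit what the paper leaves implicit: why equality forces $\Hom(M,X)\to\Hom(M,X'')$ to be surjective, that the class of objects with equality is closed under finite direct sums, and that the $\Ga$-structure is compatible with the maps.
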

\begin{proof}
  The induced map $\Hom(S,X)\to\Hom(M,X)$ is an isomorphism when $X$
  is simple, since $M\to S$ essential. This yields the inequality for
  $\ell(X)=1$, and the general case follows by induction on $\ell(X)$,
  using that each exact sequence $0\to X'\to X\to X''\to 0$ in $\A$
  induces an exact sequence
  \[0\to\Hom(M,X')\to\Hom(M,X)\to\Hom(M,X'').\] If there is equality
  for $X$, then also for all objects in $\langle X\rangle$, and this
  yields the exactness of $\Hom(X,-)$.
\end{proof}

\begin{lem}\label{le:End}
  Suppose the ext-quiver is acyclic and let $M$ be a local object.
  Then $\End(M)$ is a divison ring.
\end{lem}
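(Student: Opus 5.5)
We will show that every nonzero endomorphism $\phi$ of the local object $M$ is an automorphism. Set $T=M/\rad M$. Since $M$ has finite length, it suffices to show that $\phi$ is a monomorphism, or equivalently an epimorphism.

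The plan is to compare the image of $\phi$ with the radical of $M$.
\begin{itemize}
\item Because $M$ is local, every proper subobject of $M$ lies in $\rad M$, the unique maximal subobject.
\item So either $\phi$ is an epimorphism, and then an automorphism by length considerations, or $\operatorname{Im}\phi\subseteq\rad M$.
\end{itemize}
It remains to exclude the second case when $\phi\neq 0$.

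Assume $0\neq\phi$ and $\operatorname{Im}\phi\subseteq\rad M$. The image $\operatorname{Im}\phi\cong M/\ker\phi$ is a nonzero quotient of $M$. Its top is therefore a quotient of $T$, which is simple, so $T$ is a composition factor of $\operatorname{Im}\phi$. Hence $T$ is a composition factor of $\rad^1 M$.

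Lemma~\ref{le:cycle} with $n=1$ then gives a path of length at least $1$ from $[T]$ to $[T]$ in the ext-quiver. This is an oriented cycle, contradicting acyclicity. Hence $\phi$ is an isomorphism, and $\End(M)$ is a division ring.

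The only step needing care is this last one: $T$ must really occur as a composition factor inside $\rad M$. This holds because the image of $\phi$ is contained in $\rad M$ and maps onto $T$.
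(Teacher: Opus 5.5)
Your proof is correct and is the paper's argument. A non-invertible $\phi$ has image inside $\rad M$, and since $\operatorname{Im}\phi$ is a nonzero quotient of the local object $M$, it has $T=M/\rad M$ as a composition factor, so Lemma~\ref{le:cycle} with $n=1$ gives an oriented cycle at $[T]$; you just make explicit the steps the paper leaves implicit (epi implies iso by length, and why $T$ occurs in $\operatorname{Im}\phi$).
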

\begin{proof}
  Let $\p\in\End(M)$ be non-invertible. The image of $\p$ is a
  submodule of $\rad M$ and we may apply Lemma~\ref{le:cycle}. Thus
  the radical filtration of $M$ yields a cycle $[S]\to \cdots \to [S]$
  for $S=M/\rad M$ in the ext-quiver when $\p\neq 0$.
\end{proof}

\begin{lem}\label{le:max}
  Suppose the ext-quiver of $\A$ is acyclic. Let $M\to S$ be an essential
  epimorphism such that the canonical map $\End(M)\to\Ga$ is surjective,
 $\langle M\rangle=\A$, and $[M:S]=1$. Then $M$ is projective.
\end{lem}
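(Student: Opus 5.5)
We prove projectivity of $M$ by reducing it to the equality case of Lemma~\ref{le:Hom-subquot}, applied with $X=M$. Since $\langle M\rangle=\A$, that lemma then shows that $\Hom(M,-)$ is exact on all of $\A$, which is exactly projectivity of $M$. The hypothesis of Lemma~\ref{le:Hom-subquot} requires $\End(M)\to\Ga$ to be an isomorphism, and we only assume surjectivity. So the plan is: first upgrade surjectivity to bijectivity, then verify $\dim_\Ga\Hom(M,M)=[M:S]=1$.

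\emph{Step 1.} Since $M\to S$ is an essential epimorphism, $M$ is local with $M/\rad M\cong S$. As the ext-quiver is acyclic, Lemma~\ref{le:End} shows that $\End(M)$ is a division ring. The canonical map $\End(M)\to\Ga$ is a ring homomorphism from a division ring to the nonzero ring $\Ga$, hence injective. Together with the assumed surjectivity, it is an isomorphism.

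\emph{Step 2.} We compute $\Hom(M,M)$. It contains the identity, so $\dim_\Ga\Hom(M,M)\ge 1$. On the other hand, Lemma~\ref{le:Hom-subquot} gives $\dim_\Ga\Hom(M,M)\le [M:S]=1$. Thus equality holds for $X=M$.

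We should check that the $\Ga$-structure on $\Hom(M,X)$ is the one via $\End(M)\cong\Ga$. With that identification, $\Hom(M,M)=\End(M)\cong\Ga$ is one-dimensional directly, which confirms the equality without any further work.

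\emph{Step 3.} By the second part of Lemma~\ref{le:Hom-subquot}, $\Hom(M,-)$ is exact on $\langle M\rangle=\A$, so $M$ is projective.

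The only point needing care is Step 1, namely that injectivity of $\End(M)\to\Ga$ follows from Lemma~\ref{le:End}. This is where the acyclicity hypothesis enters. The equality argument in Step 2 relies on the inductive exactness claim already contained in Lemma~\ref{le:Hom-subquot}, which we take as given.
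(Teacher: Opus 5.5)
Your proof is correct and follows the paper's argument: Lemma~\ref{le:End} upgrades the surjection $\End(M)\to\Ga$ to an isomorphism, and then the equality case of Lemma~\ref{le:Hom-subquot} for $X=M$, together with $\langle M\rangle=\A$, gives exactness of $\Hom(M,-)$. You also spell out details the paper leaves implicit: that $M$ is local, that a unital ring map out of a division ring is injective, and that $\dim_\Ga\End(M)=1=[M:S]$.
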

\begin{proof}
We have $\End(M)\cong \Ga$ by Lemma~\ref{le:End}, and
  then Lemma~\ref{le:Hom-subquot} shows that $M$ is projective.
\end{proof}

\subsection*{Torsion pairs}

For a subset $I\subseteq \Omega$ let $\A_I$ denote the Serre
subcategory of $\A$ consisting of all objects in $\A$ having their
composition factors in $\{S_x\mid x\in I\}$. The inclusion $\A_I\to\A$
admits a right adjoint $\frt_I\colon \A\to\A_I$ taking an object to
its maximal subobject in $\A_I$ and a left adjoint
$\frf_I\colon \A\to\A_I$ taking an object to its maximal quotient in
$\A_I$.

\begin{lem}\label{le:torsion}
  Let $\Omega=\Omega'\sqcup \Omega''$ be a decomposition such that
  $\Ext^1(S_{x'}, S_{x''})=0$ for all $x'\in \Omega',x''\in
  \Omega''$. Then $(\A_{\Omega'},\A_{\Omega''})$ is a torsion pair for
  $\A$ and each object $X\in\A$ fits into an exact sequence
  \begin{equation*}\label{eq:tor}
    0\lto \frt_{\Omega'}(X)\lto X\lto \frf_{\Omega''}(X)\lto 0.
\end{equation*}    
Moreover, the functors $\frt_{\Omega'}$ and  $\frf_{\Omega''}$ are exact.
\end{lem}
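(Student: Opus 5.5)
The plan is to prove three things in turn: that $\Hom(X'',Y)=0$ whenever $X''\in\A_{\Omega''}$ and $Y\in\A_{\Omega'}$; that $X/\frt_{\Omega'}(X)$ lies in $\A_{\Omega''}$; and that $\frt_{\Omega'}$ and $\frf_{\Omega''}$ are exact. The key input throughout is an Ext-vanishing statement lifted from simples to arbitrary objects.

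For the orthogonality there is nothing to do, since $\A_{\Omega'}$ and $\A_{\Omega''}$ have disjoint sets of composition factors. The image of a morphism $X''\to Y$ is a quotient of $X''$ and a subobject of $Y$. Its composition factors therefore lie in both index sets, so it is zero.

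The main step is to extend the hypothesis to $\Ext^1(X',X'')=0$ for all $X'\in\A_{\Omega'}$ and $X''\in\A_{\Omega''}$. I would argue by induction on $\ell(X')+\ell(X'')$. Take exact sequences $0\to U\to X'\to V\to 0$ (and similarly for $X''$), and use the long exact sequences
\[\Ext^1(V,X'')\to\Ext^1(X',X'')\to\Ext^1(U,X'')\]
and the analogous one in the second variable. Both outer terms vanish by induction, so the middle one does too. The base case, where both objects are simple, is exactly the hypothesis.

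Now fix $X$ and set $T=\frt_{\Omega'}(X)$ and $F=X/T$. To show $F\in\A_{\Omega''}$, suppose not. Since $F$ has finite length, pick a composition factor $S_{x'}$ with $x'\in\Omega'$ that is lowest in a composition series of $F$. Equivalently, $\frt_{\Omega'}(F)=0$ would say $F$ has no nonzero subobject in $\A_{\Omega'}$, so I argue it that way. Assume first that $\frt_{\Omega'}(F)=0$. Choose a subobject $W\subseteq F$ minimal among those with a composition factor in $\Omega'$. Write $W\supseteq W_1$ with $W/W_1$ simple. If $W/W_1$ were indexed by $\Omega''$, then $W_1$ would have a factor in $\Omega'$, contradicting minimality. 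So $W/W_1\cong S_{x'}$ with $x'\in\Omega'$, and $W_1\in\A_{\Omega''}$. By the Ext-vanishing the sequence $0\to W_1\to W\to S_{x'}\to 0$ splits, giving $S_{x'}\subseteq F$ and contradicting $\frt_{\Omega'}(F)=0$.

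It remains to see that $\frt_{\Omega'}(F)=0$. If $T'\subseteq F$ lies in $\A_{\Omega'}$, then its preimage in $X$ is an extension of $T'$ by $T$. It therefore lies in the Serre subcategory $\A_{\Omega'}$, and maximality of $T$ forces $T'=0$. Hence $F\in\A_{\Omega''}$. By orthogonality, $F$ is the maximal quotient of $X$ in $\A_{\Omega''}$, so $F=\frf_{\Omega''}(X)$, and this gives the exact sequence and the torsion pair.

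For exactness, take $0\to X_1\to X_2\to X_3\to0$ and apply the snake lemma to the map of the three torsion sequences. The resulting sequence $0\to\frt X_1\to\frt X_2\to\frt X_3$ is left exact, since $\frt$ is a right adjoint. Similarly, $\frf X_1\to\frf X_2\to\frf X_3\to0$ is right exact. The snake lemma gives a connecting map $\delta$ from the kernel of $\frf X_1\to\frf X_2$ to the cokernel of $\frt X_2\to\frt X_3$. The source of $\delta$ lies in $\A_{\Omega''}$ and its target lies in $\A_{\Omega'}$, so $\delta=0$ by orthogonality. The snake sequence then forces both failures of exactness to vanish. I expect this bookkeeping, together with the Ext-vanishing induction, to be the only real work.
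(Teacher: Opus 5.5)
Your construction of the torsion sequence is correct and is essentially the paper's argument. The paper takes a torsionfree $X\notin\A_{\Omega''}$ of minimal length, divides out its largest subobject $X'\in\A_{\Omega''}$, and uses Ext-vanishing to split off a simple $S_{x'}$. That step silently needs $\Ext^1(S_{x'},X')=0$ for all $X'\in\A_{\Omega''}$, which is exactly what your induction provides, and your minimal subobject $W\subseteq F$ plays the role of the minimal counterexample. (The torsion-pair axiom and your identification $F=\frf_{\Omega''}(X)$ use $\Hom(\A_{\Omega'},\A_{\Omega''})=0$, which is the reverse of the direction you state; your disjointness argument gives both directions.)

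The one step that fails as written is the exactness argument: the snake lemma produces no connecting map $\ker(\frf X_1\to\frf X_2)\to\operatorname{coker}(\frt X_2\to\frt X_3)$. View $0\to\frt X_\bullet\to X_\bullet\to\frf X_\bullet\to 0$ as a short exact sequence of three-term complexes with $X_\bullet$ acyclic. The connecting isomorphisms then pair $\ker(\frf X_1\to\frf X_2)$ with the homology of $\frt X_\bullet$ at $\frt X_2$, and pair the homology of $\frf X_\bullet$ at $\frf X_2$ with $\operatorname{coker}(\frt X_2\to\frt X_3)$. You have merged these two isomorphisms into one map. With the correct pairing, the left exactness of $\frt_{\Omega'}$ and the right exactness of $\frf_{\Omega''}$, which you already have, kill both defects.

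Closer to the paper's one-line argument, you can instead apply the snake lemma to the torsion sequences of $X_1$ and $X_2$ alone. Since $X_1\to X_2$ is a monomorphism with cokernel $X_3$, this gives an exact sequence
\[0\lto\ker(\frf X_1\to\frf X_2)\xto{\ \delta\ }\operatorname{coker}(\frt X_1\to\frt X_2)\lto X_3\lto\operatorname{coker}(\frf X_1\to\frf X_2)\lto 0.\]
Here $\delta$ is a monomorphism from an object of $\A_{\Omega''}$ into an object of $\A_{\Omega'}$, so its source vanishes. The remaining short exact sequence has its outer terms in $\A_{\Omega'}$ and $\A_{\Omega''}$, so it is the torsion sequence of $X_3$. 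This identifies the two cokernels with $\frt X_3$ and $\frf X_3$, which gives exactness of both functors.
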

\begin{proof}
    The category $\A_{P'}$ is a torsion class for $\A$ and we claim that
  $\A_{P''}$ equals the corresponding torsionfree class. All objects
  from $\A_{P''}$ are torsionfree. So suppose there is a torsionfree
  object $X$ that is not in $\A_{P''}$. We may assume it is of minimal
  length and let $X'\subseteq X$ be the maximal subobject in
  $\A_{P''}$. The ext-vanishing condition implies that $X/X'$ is
  torsionfree. Then $X/X'$ belongs to $\A_{P''}$, and therefore $X$
  belongs to $\A_{P''}$ since $\A_{P''}$ is closed under
  extension. This is a contradiction. Thus $(\A_{P'},\A_{P''})$ is a
  torsion pair.

The functor $\frt_{P'}$ is left exact because it is a right
adjoint. An application of the snake lemma yields its rightexactness,
using that $\Hom(X'',X')=0$ for all $X'\in \A_{P'}$ and
$X''\in \A_{P''}$. The exactness of $\frf_{P''}$ is dual.
\end{proof}

\subsection*{The centre}

Recall that the centre $Z(\A)$ of $\A$ is the ring of natural
transformations from the identity functor to itself.

\begin{lem}\label{le:centre}
  Let $\A$ be a length category such that the ext-quiver of $\A$ is
  acyclic and connected. Then the following are equivalent.
  \begin{enumerate}
  \item For each simple object all its endomorphisms are central.
  \item The centre of $\A$ is a field and isomorphic to the
    endomorphism ring of each simple object.
  \end{enumerate}
\end{lem}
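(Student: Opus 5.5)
The implication (2)$\Rightarrow$(1) is immediate. If $Z(\A)\to\End(S)=\Ga_S$ is an isomorphism for each simple $S$, then every endomorphism of $S$ lies in the image of the canonical map, so it is central. The real content is (1)$\Rightarrow$(2), and I would prove it in three steps.

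\emph{Step 1: the evaluation maps are surjective.} Assume (1). For each $x\in\Omega$ the canonical ring homomorphism $\pi_x\colon Z(\A)\to\Ga_x$ is surjective by hypothesis.

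\emph{Step 2: the kernel of $\pi_x$ is independent of $x$, using connectedness.} Suppose there is an arrow $[S_x]\to[S_y]$, so there is a non-split extension $0\to S_x\to E\to S_y\to 0$. Let $\zeta\in Z(\A)$. Naturality of $\zeta$ with respect to the inclusion $S_x\to E$ and the projection $E\to S_y$ shows that $\zeta_E$ restricts to $\zeta_{S_x}$ and induces $\zeta_{S_y}$. I claim that $\zeta_{S_x}=0$ if and only if $\zeta_{S_y}=0$.
\begin{itemize}
\item If $\zeta_{S_y}=0$, then $\zeta_E$ factors as $E\to S_y\xrightarrow{\ \psi\ } S_x\to E$ for some $\psi$. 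Since $E$ is a non-split extension of a simple object $S_y$ by $S_x$, the object $E$ is local with socle $S_x$ when $x\ne y$, and the ext-quiver is acyclic so $x\ne y$. If $\psi\neq 0$, then $\psi$ is an isomorphism $S_y\cong S_x$, which is impossible since $x\ne y$. Hence $\psi=0$, so $\zeta_E=0$ and therefore $\zeta_{S_x}=0$.
\item Conversely, if $\zeta_{S_x}=0$, then $\zeta_E$ factors through $E\to S_y$ on the left, and the same argument shows $\zeta_E$ is nilpotent, giving a map $S_y\to E$. Because $E$ is non-split with socle $S_x$, the image of such a map must lie in $S_x$, and $\Hom(S_y,S_x)=0$. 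So $\zeta_E=0$ and hence $\zeta_{S_y}=0$.
\end{itemize}
Thus $\ker\pi_x=\ker\pi_y$ whenever $x$ and $y$ are joined by an arrow. Since the ext-quiver is connected, this common kernel $I$ is the same for all $x$.

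\emph{Step 3: the common kernel is zero.} Let $\zeta\in I$. I show $\zeta_X=0$ for every $X$ by induction on the length $\ell(X)$; the case $\ell(X)=1$ is the hypothesis $\zeta\in I$. For the inductive step, let $X'\subseteq X$ be a maximal subobject. By induction $\zeta$ acts as zero on $X'$ and on the simple quotient $X/X'$. Hence $\zeta_X$ factors as $X\to X/X'\to X'\to X$, so $\zeta_X^2=0$.

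To upgrade nilpotence to vanishing I would use the structure of the objects involved. The most direct route is Lemma~\ref{le:torsion}: choose a decomposition of $\Omega$ so that every object is a finite extension of objects in which $\zeta$ vanishes, and use naturality on the exact functors $\frt_{\Omega'}$ and $\frf_{\Omega''}$. Failing that, I would work with local objects $M$, where every map $M\to\rad M$ has image in $\rad M$, and apply Lemma~\ref{le:cycle} as in Lemma~\ref{le:End}. A nonzero $\zeta_M$ with image in $\rad M$ would produce a cycle $[S]\to\cdots\to[S]$ for $S=M/\rad M$ in the ext-quiver, contradicting acyclicity. Since every object is a quotient of a direct sum of local objects (its local subobjects cover it), naturality then gives $\zeta=0$.

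\emph{Conclusion.} With $I=0$ each $\pi_x$ is injective, hence an isomorphism $Z(\A)\cong\Ga_x$. So $Z(\A)$ is a division ring; it is commutative, being a centre, and is therefore a field isomorphic to each $\End(S_x)$.

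The hardest part is Step 3, namely showing that a central element vanishing on all simples vanishes everywhere. Acyclicity must enter through Lemma~\ref{le:cycle}, and the precise reduction to local objects needs care.
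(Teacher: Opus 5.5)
\paragraph{Step 2: the first bullet is circular.} Write $\iota\colon S_x\to E$ and $p\colon E\to S_y$. From $\zeta_{S_y}=0$, naturality only gives $p\,\zeta_E=\zeta_{S_y}\,p=0$. Hence $\zeta_E=\iota\phi$ for some $\phi\colon E\to S_x$, and $\phi\iota=\zeta_{S_x}$. The factorization $E\to S_y\xrightarrow{\psi}S_x\to E$ that you assert exists exactly when $\phi\iota=0$, i.e.\ when $\zeta_{S_x}=0$, which is the conclusion you want. Once that factorization is granted, $\zeta_{S_x}=0$ is immediate, so the socle argument does no work.

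The repair is short. $E$ is local with $\rad E=S_x$, so a nonzero $\phi\colon E\to S_x$ would factor through $E/S_x\cong S_y$, which is impossible. Alternatively, $\zeta_{S_x}\in\Ga_x$ is zero or invertible; if invertible, $\zeta_{S_x}^{-1}\phi$ retracts $\iota$ and $E$ splits. Simplest of all, drop the bullet. Your second bullet is correct and gives $\ker\pi_x\subseteq\ker\pi_y$. Since $\pi_x$ is surjective onto a division ring, $\ker\pi_x$ is a maximal ideal, and since $\ker\pi_y$ is proper, the two kernels are equal. The paper reaches the same conclusion by letting $Z(\A)$ act on the nonzero group $\Ext^1(S_y,S_x)$, where the action factors through both $\Ga_x$ and $\Ga_y$.

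\paragraph{Step 3: commit to the local-object route.} The torsion-pair sketch does not finish on its own. Knowing that $\zeta$ vanishes on $\frt_{\Omega'}(X)$ and on $\frf_{\Omega''}(X)$ again only gives $\zeta_X^2=0$. The paper's missing ingredient is that $\zeta_X$ then factors through a map $\frf_{\Omega''}(X)\to\frt_{\Omega'}(X)$. That map is zero because the two objects have disjoint composition factors. To make both pieces shorter, one takes $X$ to be a minimal-length counterexample and splits its composition factors into two nonempty classes with no extensions in the wrong direction.

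Your alternative via local objects, by contrast, is correct, complete, and genuinely different. For a local $M$ with top $S$, $\zeta_S=0$ puts the image of $\zeta_M$ inside $\rad M$. So $\zeta_M$ is a non-invertible element of the division ring $\End(M)$ of Lemma~\ref{le:End}, hence zero. Every finite length $X$ is the sum of its local subobjects: given $U\subsetneq X$, pick a maximal $V\supseteq U$ and $L$ minimal with $L\not\subseteq V$; then $L$ is local and $L\not\subseteq U$. So $\zeta_X=0$ by naturality along the inclusions.

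This route makes the induction and nilpotence computation unnecessary, and acyclicity enters only through Lemma~\ref{le:End}. Applied to the local object $E$, it also fixes the first bullet of Step 2 directly. The paper's route avoids the covering by local subobjects and stays within the torsion-pair framework it uses elsewhere.
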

\begin{proof}
  One direction is clear. Thus assume that (1) holds. For any index
  $x$ let $I_x$ denote the kernel of the canonical homomorphism
  $Z(\A)\to \Ga_x$.  Then $Z(\A)/I_x\iso \Ga_{x}$ since we assume that
  all elements of $\Ga_{x}$ are central.  For each pair $x,y$ the
  action of $Z(\A)$ on $\Ext^1(S_x,S_y)$ factors through $\Ga_x$ and
  through $\Ga_y$.  Thus $I_x=I_y$ when $\Ext^1(S_x,S_y)\neq 0$.
  Because the ext-quiver is connected, it follows that $I_x=I_y$ for
  all $x,y$. It remains to show that $I_x=0$ for all $x$. Choose
  $\eta\in I_x$ and suppose $\eta_M\neq 0$ for an object $M$. We may
  choose $M$ of minimal length and decompose the simples arising as
  composition factors into proper subsets $\calS_0$ and $\calS_1$ such
  that $\Ext^1(S_0,S_1)=0$ for all $S_i\in\calS_i$. This is possible
  since the ext-quiver of $\A$ is acyclic and yields a short exact
  sequence $0\to M_0\to M\to M_1\to 0$ such that all composition
  factors of $M_i$ are in $\calS_i$. Then $\eta_M$ induces a
  non-zero morphism $M_1\to M_0$ since $\eta_{M_0}=0=\eta_{M_1}$. This is a
  contradiction and therefore $\eta=0$.
\end{proof}

Given the equivalent conditions of the lemma, an induction on length
shows that $\Hom(X,Y)$ is a module of finite length over $Z(\A)$ for
each pair of objects $X,Y$.

\subsection*{Representations of posets}

Fix a poset $P$ and a field $\bbK$. It is convenient to view $P$ as a
category: the objects are the elements of $P$ and there is a unique
morphism $x\to y$ if and only if $x\le y$. For a ring $A$ let $\mod A$
denote the category of finitely presented right $A$-modules.
A \emph{pointwise
  finite dimensional representation} of $P$ is by definition a functor
$P^\op\to\mod\bbK$, and we set
\[\rep(P,\bbK)\coloneqq \Fun(P^\op,\mod\bbK).\]

Let $\bbK P$ denote the \emph{incidence algebra} which is a
$\bbK$-linear space with basis given by elements $a_{xy}$ for any pair
$x\le y$ in $P$ and multiplication induced by
\begin{equation*}\label{eq:mult-a}
  a_{xy}a_{vw}:=\delta_{wx} a_{vy}. 
\end{equation*}
For $M$ in $\rep(P,\bbK)$ there is a canonical right
action of $\bbK P$ on $\bigoplus_{x\in P} M(x)$.

\begin{lem}\label{le:rep}
  Let $P$ be finite. Then the assignment
  $M\mapsto \bigoplus_{x\in P} M(x)$ induces an equivalence
  \[\rep(P,\bbK)\iso\mod \bbK P.\]
\end{lem}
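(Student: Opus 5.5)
The plan is to write down an explicit functor together with a quasi-inverse, and to check that the two composites are naturally isomorphic to the identity functors.

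\emph{The multiplication.} Set $e_x\coloneqq a_{xx}$. The multiplication rule $a_{xy}a_{vw}=\delta_{wx}a_{vy}$ shows that the $e_x$ are orthogonal idempotents with $\sum_{x\in P}e_x=1$. It also gives $a_{xy}=e_y a_{xy} e_x$, so $a_{xy}\in e_y(\bbK P)e_x$. Now take $M\in\rep(P,\bbK)$. Each $x\le y$ gives a linear map $M(y)\to M(x)$ via the morphism $x\to y$. We let $a_{xy}$ act on $\bigoplus_z M(z)$ from the right by this map on the summand $M(y)$ and by zero on the other summands. We must check that this is a right module action. For $m\in M(w)$ with $x\le y=w$ and $v\le x$, first acting by $a_{xy}$ and then by $a_{vx}$ gives the image of $m$ under $M(w)\to M(x)\to M(v)$. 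By functoriality of $M$ (contravariance), this equals the image under $M(w)\to M(v)$, which is the action of $a_{vy}=a_{xy}a_{vw}$ when $w=x$, hmm. The indices must be matched with care here. For a right action one needs $m\cdot(ab)=(m\cdot a)\cdot b$, and the convention $a_{xy}a_{vw}=\delta_{wx}a_{vy}$ is exactly designed so that first $a_{xy}$ (from $y$ to $x$) and then $a_{vw}$ (from $x=w$ to $v$) gives $a_{vy}$. This bookkeeping is routine.

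\emph{The functor and finite presentation.} Morphisms of representations, being natural transformations, commute with this action. Since $P$ is finite and each $M(x)$ is finite dimensional, the module $\bigoplus_x M(x)$ is finite dimensional over $\bbK$. A finite dimensional module over a finite dimensional algebra is finitely presented. So we obtain a functor $F\colon\rep(P,\bbK)\to\mod\bbK P$.

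\emph{The quasi-inverse.} For a module $N$ set $G(N)(x)\coloneqq Ne_x$. For $x\le y$, right multiplication by $a_{xy}$ maps $Ne_y$ into $Ne_x$, since $a_{xy}=e_y a_{xy}e_x$. Functoriality of $G(N)$ follows from $a_{yz}a_{xy}=a_{xz}$ together with $a_{xx}=e_x$. Each $Ne_x$ is finite dimensional, because a finitely presented module over the finite dimensional algebra $\bbK P$ is finite dimensional.

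\emph{The two composites.} For a module $N$, the decomposition $N=\bigoplus_x Ne_x$ coming from $\sum_x e_x=1$ gives a natural isomorphism $FG\cong\mathrm{id}$. For a representation $M$, we have $(\bigoplus_z M(z))e_x=M(x)$, with the structure maps recovered, so $GF\cong\mathrm{id}$. Both isomorphisms are clearly natural.

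I expect the only real obstacle to be keeping the index conventions of the multiplication consistent with contravariance and with a right action. Everything else is formal.
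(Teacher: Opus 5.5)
Your proposal is correct and follows the paper's proof, which also exhibits the quasi-inverse $N\mapsto\bigl(x\mapsto Na_{xx}\bigr)$ with structure maps $Na_{yy}\to Na_{xx}$ given by right multiplication by $a_{xy}$; you just spell out the verifications the paper leaves implicit. The index check you hesitated over does work out: for $v\le x\le y$ one has $a_{xy}a_{vx}=a_{vy}$, which matches the composite $M(y)\to M(x)\to M(v)$ as required for a right action.
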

\begin{proof}
Taking a $\bbK P$-module $N$ to the representation of $P$ given by \[x\mapsto
N(x):=Na_{xx}\] and for $x\le y$ the linear map $N(y)\to N(x)$ given by
right multiplication with $a_{xy}$ yields a quasi-inverse.
\end{proof}

\subsection*{Proofs of the main theorems}

We are ready to prove the theorems from the introduction and keep our
notations for a length category $\A$.

\begin {proof}[Proof of Theorem~\ref{th:birkhoff}]
  Suppose that conditions (M1)--(M3) hold for $\A$ and $M\in\A$. Let
  $(M_x)_{x\in\Psi}$ be the set of join-irreducible subobjects of
  $M$. Then we claim the following:
  \begin{enumerate}
  \item For $x\in \Om$ and $\downarrow\! x=\{y\in\Om\mid y\le x\}$
    there is $x'\in\Psi$ such that
  \[N_x\coloneqq \frt_{\downarrow x}(M)= M_{x'}.\]
    \item The assignment $x\mapsto x'$ gives a bijection $\Om\to\Psi$. 
\item  Each  $M_x$ is a projective object.
\end{enumerate}
We apply Lemma~\ref{le:torsion} using the decomposition
$\Om=\downarrow\!  x\sqcup(\Om\smallsetminus\downarrow\! x)$.  Thus
$N_x$ is distributive and all objects of $\A_{\downarrow x}$ are
subquotients of finite direct sums of $N_x$ since the functor
$\frt_{\downarrow x}$ is exact. Moreover, $\frf_{\{x\}}(N_x)\cong S_x$
and therefore $N_x$ is join-irreducible in the lattice of subobjects
of $M$. It follows from Lemma~\ref{le:max} that $N_x$ is projective in
$\A_{\downarrow x}$, and then also projective in $\A$ since the
inclusion $\A_{\downarrow x}\to\A$ is left adjoint to an exact
functor. Also, $N_x=M_{x'}$ for some $x'\in\Psi$. On the other hand,
if $N\subseteq M$ is join-irreducible, then $N/\rad N\cong S_x$ for
some $x\in\Om$. All composition factors of $N$ are of the form $S_y$
for some $y\le x$ by Lemma~\ref{le:cycle}, and therefore $N=N_x$.

The object $P\coloneqq\bigoplus_{x\in\Om}M_x$ is a projective
generator for $\A$ and then $\Hom(P,-)$ induces an equivalence
$\A\iso\mod A$ for $A\coloneqq\End(P)$.

For the final assertion we identify $\Om=\Psi$. The implication
\[\Hom(M_x,M_y)\neq
  0\quad\implies\quad [S_x]\le [S_y]\] follows from Lemma~\ref{le:cycle}. If
$[S_x]\le [S_y]$ then \[M_x=\frt_{\downarrow x}(M)\subseteq \frt_{\downarrow y}(M)=M_y.\qedhere\]
\end{proof}

Recall that the \emph{Hasse diagram} of a poset $P$ is the quiver
with vertex set $P$ and an arrow $x\to y$ if $x<y$ and there is no $z$
with $x<z<y$.

\begin {proof}[Proof of Theorem~\ref{th:main}]
  Let $\A$ be a length category. Suppose first that $\A$ is equivalent
  to $\rep(P,\bbK)$ for some poset $P$ and a field $\bbK$. The
  simple representations identify with the elements of
  $P$. Specifically, for $x\in P$ the simple representation $S_x$ is
  given by $S_x(y)=\delta_{xy}\bbK$ for $y\in P$. In particular, it
  satisfies $\End(S_x)\cong \bbK$.  The ext-quiver identifies with the
  Hasse diagram of $P$. Thus there are no oriented cycles. Now
  consider the representation $M$ given by $M(x)=\bbK$ and
  $M(y)\to M(x)$ the identity map for all $x\le y$. Then $[M:S_x]=1$
  for all $x$, and therefore the lattice of subobjects is
  distributive. The subrepresentation $M_x\subseteq M$ satisfying
  $M_x(y)\neq 0$ if and only if $y\le x$ yields a projective cover of
  $S_x$. Thus every indecomposable projective representation embeds
  into $M$, and each object is a subquotient of $M^r$ for some
  $r\ge 1$.

  Now suppose that conditions (L1)--(L3) hold for $\A$. First observe
  that
\[\bbK\coloneqq Z(\A)\] is a field by
Lemma~\ref{le:centre}. Let $M$ be a distributive object such that each
object is a subquotient of a finite direct sum of copies of $M$. Note
that condition (L3) implies (M3). Thus it follows from Theorem~\ref{th:birkhoff}
that the collection $(M_x)_{x\in\Om}$ of join-irreducible subobjects
of $M$ provides projective covers $M_x\to S_x$ of the simple objects
in $\A$. We have \[\dim_\bbK\Hom(M_x,M_y)=[M_y:S_x]\] and therefore
$\Hom(M_x,M_y) \cong \bbK$ if and only if $x\le y$, with a basis
element given by the inclusion $i_{xy}\colon M_x\to M_y$. The
assignment $a_{xy}\mapsto i_{xy}$ induces a $\bbK$-algebra isomorphism
$\bbK\Om\iso A$ for $A=\bigoplus_{x,y\in\Om}\Hom(M_x,M_y)$ and
therefore an equivalence
\[\A\iso\mod A\iso\mod\bbK\Om\iso\rep (\Om,\bbK)\]
if we compose with the equivalences from Theorem~\ref{th:birkhoff} and
Lemma~\ref{le:rep}. More explicitly, the equivalence takes $X\in\A$ to
the representation of $\Om$ given by $x\mapsto\Hom(M_x,X)$.
\end{proof}

\begin {proof}[Proof of Corollary~\ref{co:birkhoff}]
  The statement of Corollary~\ref{co:birkhoff} generalises Theorem~\ref{th:main} and
  the proof is along the same lines, using the full generality of
  Theorem~\ref{th:birkhoff}. We omit the details but explain where the
  maps $\kappa_{xy}\colon K_y\to K_x$ come from when $x\le y$ in
  $\Om$. In fact, $\Hom(M_x,M_y)$ is a $K_y$-$K_x$-bimodule which
  is of length one over $K_x$. This yields by definition a ring
  homomorphism $K_y\to K_x$.
\end{proof}

\section{Remarks}

In this section we provide several comments on the main theorems.

\begin{rem}
In Theorem~\ref{th:main} there is no assumption on the poset $P$, but
$P$ is necessarily finite when $\rep(P,\bbK)$ is a length category.
\end{rem}

\begin{rem}
  Consider a length category $\A=\rep(P,\bbK)$ as in
  Theorem~\ref{th:main}. The distributive object $M$ arising in
  condition (L2) is not necessarily unique. In fact, such objects are
  parameterised by $1$-cocycles $\mu=(\mu_{xy})$ in
  $H^*(\Sigma(P),\bbK^{\times})$, where $\Sigma(P)$ denotes the
  simplicial complex associated with $P$ and for $x< y$ the map
  $M(y)\to M(x)$ sends the basis element $m_y$ to $m_x\mu_{xy}$; see
  \cite{GS1983, IK2022}.
\end{rem}

\begin{rem}
  In Theorem~\ref{th:main} condition (L1) is needed.  Let
  $\A=\mod \bbK[\e]$ where $\bbK[\e]$ denotes the algebra of dual
  numbers over a field. This is not a category of poset
  representations, though conditions (L2) and (L3) hold. Take for $M$
  the indecomposable projective module.
\end{rem}
  
The following example is interesting for several reasons, and I am
grateful to Claus Michael Ringel for suggesting it.  The example shows
that in Theorem~\ref{th:main} condition (L3) is needed. Also, it yields
infinitely many objects $M$ of length two such that all objects are a
subquotient of a finite direct sum of copies of $M$.

\begin{exm}
Consider the matrix algebra
\[\La=\begin{bmatrix}
    \bbC&\bbC\oplus\bbC_\s\\ 0&\bbC
  \end{bmatrix}\] where the action of $\bbC$ on $\bbC_\s$ from the
right is twisted by conjugation; it is a finite dimensional
$\bbR$-algebra.  There are infinitely many isomorphism classes of
length two $\La$-modules, and all but two are faithful; see the Addendum
of \cite{DR1976}.
\end{exm}

\begin{rem}
  In Theorem~\ref{th:main} the assumption on the length category $\A$
  to be connected can be removed. Then one obtains a finite product
  $\A=\A_1\times\cdots\times\A_n$ of connected length categories and
  possibly different fields $Z(\A_1),\ldots,Z(\A_n)$.
\end{rem}  

\begin{rem}\label{re:sheaves}
  Any partially ordered set $P$ carries the structure of a topological
  space, by taking as open subsets the downward closed subsets of
  $P$. This assignment identifies finite posets with finite
  $T_0$-spaces \cite[\S1]{Al1937}.  Moreover, for any abelian category $\C$ the category
  of sheaves on $P$ with values in $\C$ identifies with the category
  of representations $P^\op\to\C$, by taking a sheaf $\mathscr F$ on
  $P$ to the representation
\[x\longmapsto \mathscr F_x=\mathscr F(\downarrow\! x)\qquad (x\in P)\]
where 
\[\downarrow\! x\coloneqq \{y\in P\mid y\le x\}.\]
Thus Theorem~\ref{th:main} characterises the categories of sheaves on
finite $T_0$-spaces with values in the category of finite dimensional
vector spaces over a field. One may compare this with Gabriel's
reconstruction of a noetherian scheme from its category of
quasi-coherent sheaves \cite[VI]{Ga1962}.
\end{rem}

\begin{rem}\label{re:birkhoff}
  We provide a formulation of Birkhoff's representation theorem for
  finite distributive lattices that is parallel to
  Theorem~\ref{th:birkhoff} for length categories.  Let $L$ be a
  finite distributive lattice and let $P\subseteq L$ denote the set
  of join-irreducible elements with the induced partial order. Write
  $\two=\{0<1\}$ for the poset having two elements. Then the
  assignment
\[x\longmapsto\card\Hom(-,x)|_{P}\]
induces an isomomorphism $L\iso\Hom(P^\op,\two)$.  The inverse map is given
by
\[\p\longmapsto\bigvee_{\substack{y\in P\\\p(y)=1}} y.\]
\end{rem}

\section{A generalisation}

In this section we consider infinite posets and its linear
representations over a field. This leads to a generalisation of
Theorem~\ref{th:main}. We need to introduce a class of abelian
categories that are controlled by its finite length objects but
contain objects of infinite length when there are infinitely many
simple objects.

\subsection*{Pointwise length categories}

We consider  abelian categories that admit local length
functions. For an object $X$ in an abelian category the
\emph{multiplicity} of a simple object $S$ is given by
\[ [X:S]\coloneqq \sup_{\substack{X'\subseteq X\\ X' \text{ finite
        length}}}[X':S].\] We say that $X$ is \emph{pointwise of
  finite length} if $X$ is the sum of its finite length subobjects and
the multiplicity $[X:S]$ is finite for every simple object $S$.  For
an exact sequence $0\to X'\to X\to X''\to 0$ of pointwise finite
length objects and any simple object $S$ one has
\[ [X':S]+ [X'':S]= [X:S].\]

\begin{defn}\label{de:ptw}
  A \emph{pointwise length category} is an abelian category that satisfies
  the following conditions:
\begin{enumerate}
\item Every object is pointwise of finite length.
\item The isomorphism classes of objects form a set.  
\item For every directed family of objects $(X_i)_{i\in I}$ the
  colimit exists provided that $\sup_{i\in I} [X_i:S]$ is finite for
  every simple object $S$.
\item For every subobject $U\subseteq X$ and directed family of
  subobjects $V_i\subseteq X$
  \[U\cap\left(\sum_i V_i\right)=\sum_i (U\cap V_i).\]  
\end{enumerate}
\end{defn}

For an abelian category $\A$ let $\fl\A$ denote its full subcategory
of finite length objects and $\ptfl\A$ the full subcategory of
pointwise finite length objects. The equality $\fl\A=\ptfl\A$ holds
precisely when the number of isomorphism classes of simple objects is
finite.

\begin{lem}
  Let $\A$ be a Grothendieck category. Then  $\ptfl\A$ is  a pointwise
  length category.
\end{lem}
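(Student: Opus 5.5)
Throughout, write $\B=\ptfl\A$. The plan is to check, in order, that $\B$ is an abelian subcategory of $\A$ (closed under subobjects, quotients, and hence kernels and cokernels), and then to verify conditions (1)--(4) of Definition~\ref{de:ptw} for $\B$. Since $\B$ is a full subcategory, exactness and subobject lattices in $\B$ will then agree with those in $\A$.

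\textbf{$\B$ is abelian.} Let $X\in\B$ and $U\subseteq X$.
\begin{enumerate}
\item Every finite length subobject of $U$ is one of $X$, so $[U:S]\le[X:S]$.
\item To see that $U$ is the sum of its finite length subobjects, write $X=\sum_i X_i$, a directed sum of finite length subobjects. Grothendieck's axiom AB5 gives $U=\sum_i(U\cap X_i)$, and each $U\cap X_i$ has finite length.
\item For a quotient $X\to X''$, the images of the $X_i$ are finite length and sum to $X''$.
\item To bound $[X'':S]$, take a finite length $Y''\subseteq X''$. It is the image of some finitely many $X_i$, hence of a finite length $Y\subseteq X$, so $[Y'':S]\le[Y:S]\le[X:S]$.
\end{enumerate}
Thus $\B$ is closed under subobjects and quotients. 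Closure under finite direct sums follows from additivity of multiplicities on finite length subobjects. To prove that additivity, intersect a finite length $Y\subseteq X'\oplus X''$ with $X'$ and project to $X''$. Hence $\B$ is an abelian full subcategory. Condition (1) of Definition~\ref{de:ptw} then holds by definition.

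\textbf{Conditions (2) and (4).} Condition (4) is exactly AB5 in $\A$, which transfers because subobjects in $\B$ are subobjects in $\A$ and sums are computed in $\A$. For condition (2), a Grothendieck category has a generator $G$, so every object is a quotient of a coproduct of copies of $G$. Moreover, each object of $\B$ is a union of finite length subobjects. I would bound the cardinality of the set of isomorphism classes in $\B$ as follows.
\begin{enumerate}
\item There is only a set of simple objects, since each simple is a quotient of $G$ and $G$ has only a set of subobjects.
\item Each $X\in\B$ is a quotient of $\coprod_{x\in\Om}G^{(n_x)}$ with cardinals $n_x$ bounded in terms of $[X:S_x]$ and the length data. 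Concretely, $X$ is a quotient of a coproduct of copies of $G$ indexed by a set of bounded size.
\end{enumerate}
This bound is the step I expect to be the main technical obstacle. The cleanest route is to note that $X$ is a directed union of finite length subobjects, and each of these is a quotient of $G^n$ for some finite $n$, so there is only a set of iso classes of finite length objects. Choosing, for each simple $S$, a finite length subobject realising $[X:S]$, the sum of these subobjects over all simples has the same multiplicities as $X$. It should therefore be all of $X$: any finite length $Y\subseteq X$ satisfies $Y\subseteq Y+\sum$, and the multiplicities of $Y+\sum$ equal those of $\sum$. This makes $X$ a quotient of a coproduct indexed by $\Om$ of objects from a fixed set. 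The equality $Y+\sum=\sum$ is justified by additivity applied to the quotient $(Y+\sum)/\sum$, which has finite length and all multiplicities zero, hence is zero.

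\textbf{Condition (3).} Given a directed family $(X_i)$ in $\B$ with $\sup_i[X_i:S]<\infty$ for every $S$, take the colimit $C$ in $\A$. I would show that $C\in\B$, which makes it the colimit in $\B$ as well.
\begin{enumerate}
\item $C$ is the directed sum of the images $\bar X_i$ of the $X_i$. Each $\bar X_i$ is a quotient of $X_i$, hence lies in $\B$ with $[\bar X_i:S]\le[X_i:S]$.
\item A finite length $Y\subseteq C$ is, by AB5 and finite length, contained in $\bar X_i$ for some $i$. (The chain $Y\cap\bar X_i$ stabilises to $Y$.) Hence $[C:S]\le\sup_i[X_i:S]<\infty$.
\item $C$ is a sum of finite length subobjects, because each $\bar X_i$ is.
\end{enumerate}
Hence $C\in\B$, and condition (3) holds.
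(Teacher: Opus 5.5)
Your proposal is correct, and for condition (3), the only one the paper actually argues, you take the same route: write the colimit in $\A$ as the directed union of the images $\bar X_i$, and bound $[C:S]$ by $\sup_i[X_i:S]$ using that each finite length subobject lies in some $\bar X_i$. The paper calls (1), (2) and (4) obvious. Your closure arguments showing $\ptfl\A$ is an exact abelian subcategory, your use of AB5 for (4), and your set-theoretic argument for (2) via one finite length subobject per simple fill in those details rather than change the approach.
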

\begin{proof}
  The only condition of the above definition that is not obvious is
  (3). Consider a directed family of objects $(X_i)_{i\in I}$ in
  $\ptfl\A$ and its colimit $X\coloneqq\colim_{i\in I}X_i$ in
  $\A$. This may be rewritten as a directed union $\sum_{i\in I}Y_i$
  of subobjects $Y_i\subseteq X$, by taking for $Y_i$ the image of
  $X_i\to X$. For a simple object $S$ one has
  \[[X:S]=\left[\sum_{i\in I}Y_i:S\right]=\sup_{i\in I} [Y_i:S]\le
    \sup_{i\in I} [X_i:S].\] Thus $X\in\ptfl\A$ when
  $\sup_{i\in I} [X_i:S]<\infty$ for every simple object $S$.
\end{proof}  
  
When $\A$ is essentially small we write $\Ind\A$ for its
ind-completion, which identifies with the category of left exact
functors $\A^\op\to\Ab$ via the assignment $X\mapsto\Hom(-,X)|_\A$;
see \cite[II]{Ga1962}. Note that $\Ind\A$ is a Grothendieck category.

\begin{prop}\label{pr:ind}
  Let $\A$ be a pointwise length category. Then the
  assignment \[X\longmapsto\Hom(-,X)|_{\fl\A}\] induces an equivalence
  \[\A\longiso  \ptfl(\Ind(\fl\A)).\]
\end{prop}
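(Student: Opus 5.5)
We define $F\colon \A\to\Ind(\fl\A)$ by $F(X)=\Hom(-,X)|_{\fl\A}$. This is a left exact functor $(\fl\A)^\op\to\Ab$, so it is an object of $\Ind(\fl\A)$. The plan is to show that $F$ is fully faithful, that it lands in $\ptfl(\Ind(\fl\A))$, and that it is essentially surjective onto that subcategory.

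\emph{Basic exactness property.} For $X\in\A$, let $(X_i)_{i\in I}$ be the directed family of finite length subobjects of $X$. By Definition~\ref{de:ptw}(1), $X=\sum_i X_i$. Condition~(3) guarantees that the colimit $\colim X_i$ exists in $\A$, since its multiplicities are bounded by $[X:S]$. We first check that the canonical map $\colim X_i\to X$ is an isomorphism. It is an epimorphism because $X=\sum_i X_i$. For monicity, compare kernels using condition~(4) together with the fact that a colimit of subobjects is their sum. The key consequence is that for $C\in\fl\A$ every morphism $C\to X$ has finite length image, so it factors through some $X_i$. Hence
\[F(X)\cong\colim_i\Hom(-,X_i)|_{\fl\A}\]
in $\Ind(\fl\A)$, where filtered colimits are computed pointwise on left exact functors. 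So $F(X)$ is the directed union of the representables $F(X_i)$.

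\emph{Full faithfulness.} Since each $X_i$ has finite length,
\[\Hom_{\Ind}(F(X),F(Y))=\lim_i \Hom_{\Ind}(F(X_i),F(Y))=\lim_i\Hom(X_i,Y)\]
by Yoneda. It remains to identify this limit with $\Hom(X,Y)$. This follows because $X=\colim X_i$ in $\A$, which was shown above. Next, $F(X)$ is pointwise of finite length: the finite length objects of $\Ind(\fl\A)$ are exactly the representables of $\fl\A$, because $\fl\A$ is closed under subobjects and quotients there. Moreover $F(X)$ is the sum of the $F(X_i)$, and its multiplicities agree with those of $X$, so they are finite.

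\emph{Essential surjectivity, the main obstacle.} Let $Y\in\ptfl(\Ind(\fl\A))$. Then $Y$ is the directed sum of its finite length subobjects $Y_i$, each of the form $F(X_i)$ with $X_i\in\fl\A$. The inclusions $Y_i\subseteq Y_j$ come from monomorphisms $X_i\to X_j$ by full faithfulness on $\fl\A$. We have $\sup_i[X_i:S]=[Y:S]<\infty$, so condition~(3) gives $X=\colim X_i$ in $\A$. The delicate point is to show $F(X)\cong Y$. This requires that $F$ preserves this particular colimit, that is, that every finite length subobject of $X$ lies in the image of some $X_i$, and that the maps $X_i\to X$ are monomorphisms. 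Monicity should follow from condition~(4) applied to kernels: a kernel of $X_i\to X$ would be a nonzero finite length subobject killed in the colimit, contradicting that the transition maps are monic. The factorisation of finite length subobjects comes from the first step applied to $X$. Then $F(X)=\sum_i F(X_i)=Y$.
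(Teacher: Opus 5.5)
Your route is the paper's: write each object as the directed union of its finite length subobjects, get full faithfulness from Yoneda on $\fl\A$, and get essential surjectivity by writing a pointwise finite length object of $\Ind(\fl\A)$ as a directed union of representables and taking the colimit in $\A$ via condition~(3). Your first two steps are fine. In the first step, condition~(4) can be used only because the ambient object $X$ gives a cocone $X_i\to X$ with monic legs. This forces each $X_i\to\colim X_i$ to be monic, so $\colim X_i$ is the directed sum of copies of the $X_i$, and (4) applies inside it to the kernel of $\colim X_i\to X$.

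The genuine gap is the monicity of $X_i\to X=\colim X_i$ in the essential surjectivity step. Here no ambient object contains all the $X_i$ as subobjects, and (4) speaks only about subobjects of one fixed object. A nonzero subobject of some $X_i$ being killed in the colimit, even though all transition maps are monic, is no contradiction. It is exactly the failure of exactness of directed colimits, and (1)--(4) do not rule it out.

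\emph{The step fails under Definition~\ref{de:ptw}.} Let $\A\subseteq\rep(\mathbb N,\bbK)$ consist of those $V$ with $V^\infty(m)\coloneqq\bigcap_{n\ge m}\operatorname{im}(V(n)\to V(m))=0$ for all $m$.
\begin{itemize}
\item Since the $V(m)$ are finite dimensional, the images stabilise (Mittag-Leffler). From this one checks that $\A$ is a Serre subcategory containing all finite length representations, and that $V\mapsto V/V^\infty$ is left adjoint to the inclusion.
\item Hence (1), (2), (4) are inherited, because subobject lattices agree. Condition (3) holds, since the colimit in $\A$ is the reflection of the pointwise colimit.
\item Take the projectives $P_0\subseteq P_1\subseteq\cdots$ with $P_n(m)=\bbK$ for $m\le n$. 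For every $W\in\A$ one gets $\lim_n\Hom(P_n,W)=\lim_n W(n)=0$, so $\colim_n P_n=0$ in $\A$. This holds although all transition maps are monic and all multiplicities are at most $1$.
\item Accordingly, the directed union of the $\Hom(-,P_n)|_{\fl\A}$ lies in $\ptfl(\Ind(\fl\A))$ but not in the essential image. An $X\in\A$ realising it would receive compatible monomorphisms $P_n\to X$. Their union is the constant representation with identity maps, which is not in $\A$, yet $\A$ is closed under subobjects.
\end{itemize}

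So the proposition, with Definition~\ref{de:ptw} as stated, needs an extra axiom. The paper's one-line proof has the same hole. Its last sentence tacitly uses that directed colimits in $\A$ are exact, which is asserted a little later (in the remark extending Lemma~\ref{le:torsion}) but never derived.

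If you add such an axiom, your argument goes through. For instance, require that for a directed system of monomorphisms with bounded multiplicities the canonical maps to the colimit are monic. Then the images $Z_i$ of the $X_i$ exhaust $X$. Each finite length $W\subseteq X$ satisfies $W=\sum_i(W\cap Z_i)$ by (4), hence $W\subseteq Z_i$ for some $i$. Therefore $\Hom(-,X)|_{\fl\A}\cong Y$.
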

\begin{proof}
  Write any object in $\A$ as directed colimit of its finite length
  subobjects. For $X=\colim_{X'\subseteq X}X'$ and
  $Y=\colim_{Y'\subseteq Y}Y'$ we have
  \[\Hom(X,Y)\cong\lim_{X'\subseteq X}\Hom(X',Y)
    \cong\lim_{X'\subseteq X}\colim_{Y'\subseteq Y}\Hom(X',Y').\] From
  this it follows that $\A\to\Ind(\fl\A)$ is fully faithful, since it
  is fully faithful when restricted to $\fl\A$ by Yoneda's
  lemma. Every pointwise finite length object of $\Ind(\fl\A)$ is a
directed union of finite length subobjects and therefore in the essential image of the
  functor.
\end{proof}

\begin{cor}\label{co:ptfl}
A pointwise length category is determined by its
full subcategory of finite length objects. In particular, a functor
$\A\to\B$ between  pointwise length categories is an equivalence if it restricts
to an equivalence $\fl\A\iso\fl\B$ and preserves directed colimits.\qed
\end{cor}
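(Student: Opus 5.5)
The corollary has two claims. The first, that a pointwise length category is determined by its finite length objects, I will read off directly from Proposition~\ref{pr:ind}: any pointwise length category $\A$ is equivalent to $\ptfl(\Ind(\fl\A))$, and the right-hand side depends only on $\fl\A$. So an equivalence $\fl\A\iso\fl\B$ induces an equivalence $\Ind(\fl\A)\iso\Ind(\fl\B)$ of Grothendieck categories. This equivalence preserves simple objects, subobjects and directed unions, hence multiplicities. It therefore restricts to an equivalence $\ptfl(\Ind(\fl\A))\iso\ptfl(\Ind(\fl\B))$, and hence $\A\simeq\B$.

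For the second claim, let $F\colon\A\to\B$ restrict to an equivalence $\fl\A\iso\fl\B$ and preserve directed colimits. My plan is to compare $F$ with the composite
\[\A\iso\ptfl(\Ind(\fl\A))\iso\ptfl(\Ind(\fl\B))\iso\B\]
by showing that $F$ is fully faithful and essentially surjective directly, mirroring the proof of Proposition~\ref{pr:ind}.

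The first step is to write $X=\colim_{X'\subseteq X}X'$ as the directed colimit of its finite length subobjects. This is a genuine colimit in $\A$ because $X$ is the sum of these subobjects (condition (1) of Definition~\ref{de:ptw}). Since $F$ preserves this colimit, and each $F(X')$ is of finite length, I get
\[\Hom(FX,FY)\cong\lim_{X'}\Hom(FX',FY).\]

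The second step, which I expect to be the main obstacle, is to prove
\[\Hom(FX',FY)\cong\colim_{Y'}\Hom(FX',FY')\]
for $X'$ of finite length. This needs two facts. First, a morphism from a finite length object into the directed union $FY=\sum_{Y'} F(Y')$ factors through some $F(Y')$. Second, the maps $F(Y')\to F(Y)$ are monomorphisms, so that $F(Y)$ really is the directed union of the $F(Y')$. The first fact follows because the image of such a morphism has finite length and is covered by the directed family of subobjects $F(Y')$, and condition (4) of Definition~\ref{de:ptw} gives $\operatorname{Im}=\sum_{Y'}(\operatorname{Im}\cap F(Y'))$; by finite length this sum stabilises at one term. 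For the second fact, I would try to show that $F$ is exact on $\fl\A$, being an equivalence of abelian categories there. Since $F(X)$ is a directed colimit of monomorphisms, and directed colimits are exact in $\Ind$, this exactness should pass to all of $\A$. With both facts, the computation in the proof of Proposition~\ref{pr:ind}, together with full faithfulness on $\fl\A$, yields $\Hom(X,Y)\cong\Hom(FX,FY)$.

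The final step is essential surjectivity. Given $Z\in\B$, I write $Z$ as the directed union of its finite length subobjects $Z'$ and choose objects $X_{Z'}\in\fl\A$ with $F(X_{Z'})\cong Z'$. Full faithfulness makes these into a directed system in $\A$ whose transition maps are monomorphisms. Its multiplicities are bounded by the finite numbers $[Z:S]$, using that $F$ matches the simples of $\fl\A$ and $\fl\B$. So condition (3) of Definition~\ref{de:ptw} provides a colimit $X$ in $\A$, and $F(X)\cong\colim Z'=Z$ because $F$ preserves directed colimits.
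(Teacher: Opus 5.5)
Your proposal is correct and is essentially the paper's argument: the paper records the corollary as an immediate consequence of Proposition~\ref{pr:ind}, and your direct check of full faithfulness and essential surjectivity unpacks that proposition's $\lim$--$\colim$ computation for $F$. In doing so you usefully isolate the point the paper leaves tacit, namely that $F(Y')\to F(Y)$ is a monomorphism. To justify it, note that $[F(Y''):F(S)]=[Y'':S]\le[Y:S]$. Hence the colimit of the $F(Y'')$ in the Grothendieck category $\Ind(\fl\B)$ lies in $\ptfl(\Ind(\fl\B))\simeq\B$, so it agrees with $F(Y)$, and $F(Y')\to F(Y)$ is monic by exactness of directed colimits in $\Ind(\fl\B)$.
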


From now on fix a pointwise length category $\A$ and choose a representative
set of simple objects $(S_x)_{x\in \Om}$ in $\A$.

We beging with a remark on Lemma~\ref{le:torsion}. The assertion of
this lemma extends to any
pointwise length category if we set for $I\subseteq \Om$ and $X\in\A$
\[\frt_I(X)\coloneqq\colim_{\substack{X'\subseteq X\\
      X'\in\fl\A}}\frt_I(X')\qquad\text{and}\qquad
  \frf_I(X)\coloneqq\colim_{\substack{X'\subseteq X\\
      X'\in\fl\A}}\frf_I(X')\] because taking directed colimits in
$\A$ is exact. In fact,
\[\A_I\coloneqq\{X\in\A\mid \frt_I(X)=X\}\]
is a Serre subcategory of $\A$; it is a pointwise length
category satisfying
\[\fl(\A_I)=(\fl\A)_I.\]

Now fix a commutative ring $\bbK$ and suppose that the category $\A$
is $\bbK$-linear. Moreover, we assume that $\fl\A$ has enough
projective objects and is \emph{hom-finite}, so $\Hom(X,Y)$ is a
finite length $\bbK$-module for all $X,Y\in\A$.  For each $x\in \Om$ fix
a projective cover $P_x\to S_x$.

Let $\proj\A$ denote the $\bbK$-linear category given by
$\Ob(\proj\A)\coloneqq \Om$ and
\[\Hom(x,y)\coloneqq \Hom(P_x,P_y)\] for $x,y\in \Om$.
For an object $M\in\A$ the assignment
$x\mapsto\Hom(P_x,M)$ induces a $\bbK$-linear functor
$\bar M\colon(\proj\A)^\op\to \mod\bbK$.

\begin{lem}\label{le:ptfl}
  The assignment $M\mapsto\bar M$ induces a 
  fully faithful functor
  \[h\colon\A\lto\Fun_\bbK((\proj\A)^\op,\mod\bbK)\] that preserves
  colimits. Moreover, it identifies $\fl\A$ with the subcategory of
  finite length objects of $\Fun_\bbK((\proj\A)^\op,\mod\bbK)$.
\end{lem}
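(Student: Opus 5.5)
The plan is to treat the finite length part first and then extend to all of $\A$ using directed colimits and Corollary~\ref{co:ptfl}-type arguments. Write $\mathcal F\coloneqq\Fun_\bbK((\proj\A)^\op,\mod\bbK)$. Since $\fl\A$ has enough projectives and each object of $\fl\A$ has a projective cover, every finite length projective is a finite direct sum of copies of the $P_x$. So $\fl\A$ has a projective generating family $(P_x)_{x\in\Om}$ whose endomorphism category is $\proj\A$.

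\textbf{Step 1: $\fl\A$.} I first check that $h$ is fully faithful on projectives; this is Yoneda, because $h(P_y)=\Hom(-,y)$ is representable. For $X\in\fl\A$ I choose a projective presentation $Q_1\to Q_0\to X\to 0$ in $\fl\A$ with $Q_i$ finite sums of $P_x$'s. Since each $\Hom(P_x,-)$ is exact, $h$ is exact on $\fl\A$. Hence $h(X)$ is the cokernel of a map between finitely generated representables, and the standard argument gives full faithfulness on $\fl\A$: $\Hom(X,Y)$ and $\Hom(hX,hY)$ are both kernels of the same induced map. Hom-finiteness gives $h(X)(x)=\Hom(P_x,X)\in\mod\bbK$, and $h(S_x)$ is the simple functor concentrated at $x$, with $h(S_x)(y)=0$ for $y\ne x$. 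By induction on length and exactness, $h(X)$ has finite length in $\mathcal F$.

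Conversely, let $F$ be a finite length object of $\mathcal F$. Its composition factors are simple functors, and each of these is $h(S_x)$: a simple functor is supported at one point $x$ with value a simple $\End(P_x)$-module, i.e.\ $\Gamma_x=\End(P_x)/\rad$. I show the essential image of $h|_{\fl\A}$ is closed under extensions in $\mathcal F$. Given $0\to hX'\to F\to hX''\to 0$, I use that $\Ext^1_{\mathcal F}(hX'',hX')\cong\Ext^1_\A(X'',X')$. This follows by comparing projective presentations: $h$ sends projectives to projectives, is exact and fully faithful, so the long exact Hom sequences match. I expect this extension-closure step to be the main obstacle, since it is where finite length functors must be realised by objects of $\A$. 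If the Ext comparison is awkward, I will instead present $F$ as a cokernel of a map $h(Q_1)\to h(Q_0)$ of finite sums of representables, lift the map by fullness, and take the cokernel in $\A$. The subtle point in that alternative is that the relevant $Q_i$ can be chosen of finite length, using finiteness of the support of $F$ together with hom-finiteness.

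\textbf{Step 2: all of $\A$.} Each $M\in\A$ is the directed union of its finite length subobjects. Since $P_x$ has finite length, hence is finitely presented in the sense that $\Hom(P_x,-)$ commutes with directed unions (axiom (4) of Definition~\ref{de:ptw}), I get $h(M)=\colim h(M')$ over the finite length $M'\subseteq M$. More generally, $h$ preserves directed colimits, and it is right exact because $\Hom(P_x,-)$ is exact on $\A$: projectivity of $P_x$ extends from $\fl\A$ to $\A$ by the colimit description. Hence $h$ preserves all colimits. Full faithfulness then follows exactly as in the proof of Proposition~\ref{pr:ind}, using $\Hom(M,N)=\lim_{M'}\colim_{N'}\Hom(M',N')$ on both sides together with Step 1. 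For the last claim, a finite length object of $\mathcal F$ lies in the image of $\fl\A$ by Step 1. Conversely, $h(M)$ of finite length forces $M$ to have only finitely many composition factors, by additivity of $[\,\cdot\,:S_x]=\ell_{\Gamma_x}\Hom(P_x,-)$, and so $M\in\fl\A$.
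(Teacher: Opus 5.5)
Your proposal is correct and has the same structure as the paper's proof: full faithfulness on $\fl\A$ first, then extension to all of $\A$ by the $\lim$--$\colim$ computation over finite length subobjects, and the last assertion read off from the identification of the simple functors with the $h(S_x)$. The differences are only in detail. You get the finite length case directly from projective presentations and Yoneda, whereas the paper reduces to finitely many $x$ and the module category over $\End(\bigoplus P_x)$. You also spell out points the paper leaves implicit: colimit preservation and projectivity of $P_x$ in $\A$ via axiom (4), and closure of the essential image under extensions.
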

\begin{proof}
  From the definition it is clear that $h$ preserves colimits.
  When $\Om$ is finite the assignment $F\mapsto \bigoplus_{x\in \Om}F(x)$
  identifies $\Fun_\bbK((\proj\A)^\op,\mod\bbK)$ with the category of finite
 length  modules over the endomorphism algebra of
  $P=\bigoplus_{x\in \Om}P_x$.  From this case it follows that
  \[\Hom(X,Y)\cong \Hom(\Hom(P,X),\Hom(P,Y))\cong\Hom(h(X),h(Y))\]
  when $X$ and $Y$ are of finite length, because we can assume that
  $\Om$ consists of the composition factors of $X$ and $Y$.  Now write
  any object in $\A$ as directed colimit of its finite length
  subobjects.  For $X=\colim_{X'\subseteq X}X'$ and
  $Y=\colim_{Y'\subseteq Y}Y'$ we compute
  \begin{align*}
    \Hom(X,Y)&    \cong\lim_{X'\subseteq X}\colim_{Y'\subseteq
               Y}\Hom(X',Y')\\
             &\cong\lim_{X'\subseteq X}\colim_{Y'\subseteq
               Y}\Hom(h(X'),h(Y'))\\
             &\cong\Hom(\colim_{X'\subseteq
               X}h(X'),\colim_{Y'\subseteq Y}h(Y'))\\
             &\cong\Hom(h(\colim_{X'\subseteq
               X}X'),h(\colim_{Y'\subseteq Y}Y'))\\
             &\cong\Hom(h(X),h(Y)).
  \end{align*}
The simple functors   $(\proj\A)^\op\to\mod\bbK$ are up to
isomorphism of the form
\[h(S_x)\cong\Hom(-,x)/\rad\Hom(-,x) \qquad (x\in \Om)\]
since $\End(x)$ is local. From this the last assertion follows.
\end{proof}

\subsection*{Representations of posets}

Let $P$ be a poset and $\bbK$ a field. We write $\bbK P$ for the
$\bbK$-linearisation of the category $P$ and
$\Fun_\bbK((\bbK P)^\op,\mod\bbK)$ denotes the category of
$\bbK$-linear functors $(\bbK P)^\op\to\mod\bbK$. Then restriction
along the inclusion $P\to\bbK P$ induces an equivalence
\[\Fun_\bbK((\bbK P)^\op,\mod\bbK)\iso\rep(P,\bbK).\]

Let $M_P$ denote the representation of $P$ given by $M_P(x)=\bbK$ and
$M_P(y)\to M_P(x)$ the identity map for all $x\le y$.

\begin{lem}\label{le:downfinite}
For $P$  and $\bbK$ the following are equivalent.
\begin{enumerate}
\item The poset $P$ is down-finite, that is,
  $\downarrow\! x$ is a finite set for each $x\in P$.
\item The category $\rep(P,\bbK)$ is a pointwise length category.
\item The representation $M_P$  is a sum of finite length subobjects. 
\end{enumerate}
\end{lem}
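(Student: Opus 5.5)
I plan to prove the cycle (1)$\Rightarrow$(2)$\Rightarrow$(3)$\Rightarrow$(1), with the first implication carrying most of the work.

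\emph{(1)$\Rightarrow$(2).} Assume $P$ is down-finite. First I describe the finite length objects of $\rep(P,\bbK)$. The simples are the $S_x$ with $S_x(y)=\delta_{xy}\bbK$. A representation $X$ has finite length exactly when $\sum_x\dim X(x)<\infty$, because for any subset of $P$ the subspace $\bigoplus_{y\le x}X(y)$ is a subrepresentation.

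Next I identify the pointwise finite length objects as the $X$ with every $X(x)$ finite dimensional. Let $x\in P$ and $v\in X(x)$. The subrepresentation generated by $v$ is supported on $\downarrow\! x$. That set is finite, so this subrepresentation has finite length. Hence every pointwise finite dimensional $X$ is the sum of its finite length subobjects. The multiplicity $[X:S_x]$ equals $\dim X(x)$, since finite length subobjects $X'$ satisfy $[X':S_x]=\dim X'(x)\le\dim X(x)$, and equality is attained by the subobject generated by a basis of $X(x)$.

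With this description the axioms of Definition~\ref{de:ptw} follow:
\begin{enumerate}
\item[(1)] holds by the description of pointwise finite length objects just given.
\item[(2)] holds because isomorphism classes of representations of a set-indexed poset form a set.
\item[(3)] holds by computing directed colimits pointwise in vector spaces. The dimension at $x$ is at most $\sup_i\dim X_i(x)=\sup_i[X_i:S_x]<\infty$, so the colimit stays in $\rep(P,\bbK)$. It is also the colimit there, because colimits in the functor category $\Fun(P^\op,\mathrm{Mod}\,\bbK)$ are pointwise.
\item[(4)] holds because subobjects, intersections and sums are all computed pointwise. The axiom then reduces to the vector space identity $U\cap\sum_i V_i=\sum_i(U\cap V_i)$ for a directed family $(V_i)$, which is standard.
\end{enumerate}

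\emph{(2)$\Rightarrow$(3).} This is immediate: $M_P$ is an object of $\rep(P,\bbK)$, and condition (1) of Definition~\ref{de:ptw} says every object is the sum of its finite length subobjects.

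\emph{(3)$\Rightarrow$(1).} Suppose $M_P=\sum_i M_i$ with each $M_i$ of finite length, and fix $x\in P$. The element $1\in M_P(x)=\bbK$ lies in a finite sum of the $M_i$, hence in some finite length subobject $N$. Because $N$ is a subrepresentation, $N(y)$ contains the image of $1$ under the identity map $M_P(x)\to M_P(y)$ for every $y\le x$. So $N(y)\neq 0$ for all $y\in\downarrow\! x$. Since $N$ has finite length, $\sum_y\dim N(y)<\infty$, which forces $\downarrow\! x$ to be finite.

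The main obstacle is the verification in (1)$\Rightarrow$(2). There I must check carefully that the multiplicity defined via a supremum coincides with $\dim X(x)$, and that directed colimits which are pointwise finite dimensional are genuinely colimits inside $\rep(P,\bbK)$. Both reduce to the pointwise computations indicated above.
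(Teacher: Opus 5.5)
Your proof is correct and follows essentially the same route as the paper. In (1)$\Rightarrow$(2) the key step is that every object is a sum of finite length subobjects supported on $\downarrow\! x$; the paper uses the truncation $M_x$ of $M$ to $\downarrow\! x$ where you use the subobject generated by a vector, and you verify the remaining axioms that the paper calls automatic. Your (3)$\Rightarrow$(1) is the paper's observation that a finite length subrepresentation of $M_P$ which is nonzero at $x$ is nonzero on all of $\downarrow\! x$. One point to tidy: your stated reason that finite length is the same as finite total dimension does not actually prove it. Argue instead that every simple object is isomorphic to some $S_x$ (a simple $S$ with $S(x)\neq 0$ must vanish outside $\downarrow\! x$ and on all $y<x$), and that dimension is additive along a composition series. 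This fact is needed in (3)$\Rightarrow$(1), where $P$ is not yet known to be down-finite.
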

\begin{proof}
  (1) $\Rightarrow$ (2) All one needs to show is that every object in
  $\rep(P,\bbK)$ is a sum of finite length subobjects, because the
  other conditions in Definition~\ref{de:ptw} are automatic.

  Let $M\in \rep(P,\bbK)$. For $x\in P$ consider
  the subrepresentation $M_x\subseteq M$
\[M_x(y)\coloneqq \begin{cases} M(y)&\text{if }y\le x,\\
    0& \text{otherwise}.
\end{cases}\]
Then $M_x$ has finite length because  $P$ is down-finite, and
$M=\sum_{x\in P}M_x$.

(2) $\Rightarrow$ (3) Clear.
 
 (3) $\Rightarrow$ (1) Let $x\in P$ and $N\subseteq M_P$ a
 subrepresentation such that $N(x)\neq 0$. Then $N(y)\neq 0$ for all
 $y\le x$.  Thus if $M_P$ is a sum of finite length subobjects, then
 $P$ needs to be down-finite.
\end{proof}

The above lemma provides the obstruction on the class of posets we can
deal with when generalising Theorem~\ref{th:main} to arbitrary posets.

\begin{thm}\label{th:main2}
  A connected pointwise length category is equivalent to the category
  of pointwise finite dimensional representations of a poset over a
  field if and only if the following hold:
  \begin{enumerate}
  \item[(L1)] The ext-quiver admits only finitely many paths ending in a fixed vertex. 
  \item[(L2)] There is an object $M$ with a distributive lattice of
    subobjects such that each  finite length object is a subquotient of a finite
    direct sum of copies of $M$.
  \item[(L3)] For each simple object all its endomorphisms are central. 
\end{enumerate}
In this case the field equals the centre of the category and the poset is
given by the isomorphism classes of simple objects, with $[S]\le [T]$
if there is a path from $[S]$ to $[T]$ in the ext-quiver.
\end{thm}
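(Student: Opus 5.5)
The plan is to reduce to the finite case, Theorem~\ref{th:main}, working with the finite length objects and with Serre subcategories $\A_I$ for finite down-closed $I\subseteq\Om$. At the end we pass to the whole category using Corollary~\ref{co:ptfl}.

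\emph{Necessity.} Let $\A=\rep(P,\bbK)$ be a pointwise length category. Lemma~\ref{le:downfinite} shows that $P$ is down-finite. The simples are the $S_x$ with $\End(S_x)\cong\bbK$, and the ext-quiver is the Hasse diagram of $P$. Every path ending in $x$ stays inside the finite set $\downarrow\! x$ and has no repeated vertices, so there are only finitely many such paths; this gives (L1). For (L2) take $M=M_P$. It is multiplicity free, hence distributive by Lemma~\ref{le:mfree}. A finite length representation $X$ is supported on a finite down-closed set $I$. The projective covers in $\rep(I,\bbK)$ are the subrepresentations $M_x\subseteq M_P$ with $x\in I$, so $X$ is a quotient of a finite sum of these and hence a subquotient of some $M_P^r$. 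For (L3), a natural transformation is given by scalars, so the central endomorphisms of $S_x$ are the scalars, and $\End(S_x)=\bbK$.

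\emph{Sufficiency.} Assume (L1)--(L3). By (L1) the ext-quiver has no oriented cycles, since a cycle through a vertex would give infinitely many paths ending there. So $\Om$ is a poset in which every $\downarrow\! x$ is finite. Fix a finite down-closed $I\subseteq\Om$. By Lemma~\ref{le:torsion}, $\A_I$ is a length category and $\frt_I$ is exact. Hence $\frt_I(M)$ is distributive, and every object of $\fl\A_I$ is a subquotient of a finite sum of copies of $\frt_I(M)$; here one should check that $\frt_I(M)$ has finite length, which holds because $[M:S_x]\le 1$ by Lemma~\ref{le:mfree} applied to finite length subobjects. Endomorphisms of simples in $\A_I$ stay central, since $Z(\A)\to Z(\A_I)$ is defined by restriction.

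Theorem~\ref{th:birkhoff} then yields projective covers $M_x=\frt_{\downarrow x}(M)$ of $S_x$. These do not depend on $I$, and by the argument in the proof of Theorem~\ref{th:birkhoff} they are projective in all of $\fl\A$. Lemma~\ref{le:centre} needs connectedness. I will run its argument directly on $\A$: the kernels $I_x$ agree along ext-arrows, and an element killing all the $\Ga_x$ vanishes on every finite length object by the same minimal-length argument. Since directed colimits are exact and every object is the sum of its finite length subobjects, it then vanishes on all objects. This shows $\bbK:=Z(\A)\cong\Ga_x$ for all $x$, and gives $\dim_\bbK\Hom(M_x,M_y)=[M_y:S_x]$, which is $1$ if $x\le y$ and $0$ otherwise. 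As in the proof of Theorem~\ref{th:main}, the inclusions $i_{xy}$ identify $\proj\A$ with the linearisation $\bbK\Om$.

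\emph{Equivalence.} Lemma~\ref{le:ptfl} gives a fully faithful, colimit preserving functor $\A\to\Fun_\bbK((\bbK\Om)^\op,\mod\bbK)\simeq\rep(\Om,\bbK)$, namely $X\mapsto\Hom(M_-,X)$. It identifies $\fl\A$ with the finite length representations. By Lemma~\ref{le:downfinite}, $\rep(\Om,\bbK)$ is a pointwise length category, so Corollary~\ref{co:ptfl} upgrades this to an equivalence. Essential surjectivity also follows directly: each pointwise finite dimensional representation is a directed union of finite length ones, and $\A$ has the required colimits by Definition~\ref{de:ptw}(3) because the multiplicities are bounded.

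The main obstacle is the centre step. The ext-quiver may be infinite, so we must make sure that $Z(\A)$ really is a field acting by scalars on each $\Ga_x$, and that the finite pieces $\A_I$ glue compatibly, without being allowed to assume there are finitely many simples.
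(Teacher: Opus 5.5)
Your proposal is correct and follows the paper's proof essentially step by step. You reduce to the finite down-closed pieces $\A_{\downarrow x}$ (or $\A_I$) via the exact torsion functors, obtain the projective covers $M_x=\frt_{\downarrow x}(M)$ from the finite case, identify $\proj\A$ with $\bbK\Om$, and conclude with Lemma~\ref{le:ptfl} and Corollary~\ref{co:ptfl}. The deviations are cosmetic. You run the centre argument directly on $\A$ rather than via $Z(\fl\A)\cong Z(\A)$. In the necessity part, note that Lemma~\ref{le:mfree} is stated for finite length objects, so for $M_P$ either argue pointwise (its subobjects are the down-closed subsets of $P$) or, as the paper does, pass through finite length subobjects using axiom (4) of Definition~\ref{de:ptw}.
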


\begin{proof}
  Fix a connected pointwise length category $\A$.  Suppose first that
  $\A\iso\rep(P,\bbK)$ for a poset $P$ and a field $\bbK$. Then $P$ is
  down-finite by Lemma~\ref{le:downfinite}. We need to show that the
  conditions (L1)--(L3) hold. The proof of Theorem~\ref{th:main}
  carries over. For instance, (L1) holds because the ext-quiver
  identifies with the Hasse diagram of $P$, and (L3) is clear as well.
  For (L2) we can reduce to the case that $P$ is finite, because any
  finite subset of $P$ is contained in a finite downward closed subset
  $P'\subseteq P$. Then $\A_{P'}$ is a length category and
  $\A_{P'}\iso\rep(P',\bbK)$. Moreover one uses that
  $\frt_{P'}(M_P)=M_{P'}$. Thus the object
  $M_P=\sum_{P'\subseteq P} M_{P'}$ is distributive since all
  finite length subobjects are distributive and finite intersections
  distribute over directed unions. It follows that (L2) holds for $\A$,
  given that
  \[\fl\A=\bigcup_{P'\subseteq P}\A_{P'}.\]

  Now suppose that conditions (L1)--(L3) hold for $\A$. First observe
  that \[\bbK\coloneqq Z(\fl\A)\cong Z(\A)\] is a field by
  Lemma~\ref{le:centre}. Choose a representative set of simple
  objects $(S_x)_{x\in \Om}$ in $\A$. Then $\Om$ is partially ordered  via $x\le y$ if
  there is a path from $S_x$ to $S_y$ in the ext-quiver of $\A$. 

  For $x\in \Om$ consider the subobject
  \[M_x\coloneqq \frt_{\downarrow x}(M)\subseteq M.\] We claim that $M_x$
  has finite length and that it is a projective cover of $S_x$. We
  apply Lemma~\ref{le:torsion} using the decomposition
  $\Om=\downarrow\!  x\sqcup(\Om\smallsetminus\downarrow\! x)$. Note that
  $\downarrow\!  x$ is a finite set because of (L1).
Thus $\A_{\downarrow x}$ is a length category and we wish to apply
  Theorem~\ref{th:main}.  It is easily checked that (L1)--(L3) hold; in
  particular $M_x$ is distributive and all objects of
  $\A_{\downarrow x}$ are subquotients of finite direct sums of $M_x$
  since the functor $\frt_{\downarrow x}$ is exact. Thus $M_x$ is
  projective in $\A_{\downarrow x}$ by Lemma~\ref{le:max},
  and then also projective in $\A$ since the inclusion
  $\A_{\downarrow x}\to\A$ is left adjoint to an exact functor.

  We have $\Hom(M_x,M_y)\cong\bbK$ if and only if $x\le y$, and a
  basis element is given by the inclusion $M_x\to M_y$. This follows
  as in the proof of Theorem~\ref{th:main} and yields a $\bbK$-linear
  equivalence $\bbK \Om\iso\proj\A$, where $\bbK \Om$ denotes the
  $\bbK$-linearisation of $\Om$.  Now consider the composite
\[\A\xto{\ h\ }\Fun_\bbK((\proj\A)^\op,\mod\bbK)\iso\Fun_\bbK((\bbK
  \Om)^\op,\mod\bbK) \iso\rep(\Om,\bbK)\]
where the first  functor is
from Lemma~\ref{le:ptfl}. This is fully faithful and we need to
show that it is essentially surjective. The functor sends $M$ to $M_\Om$
and this is a sum of finite length objects. Thus $\rep(\Om,\bbK)$ is a
pointwise length category by Lemma~\ref{le:downfinite}. The functor
identifies $\fl\A$ with the full subcategory of finite length objects
of $\rep(\Om,\bbK)$ by Lemma~\ref{le:ptfl}. Thus it follows from
Corollary~\ref{co:ptfl} that $\A\to \rep(\Om,\bbK)$ is an equivalence.
\end{proof}

\begin{rem}
  For a general poset $P$ and a field $\bbK$ there is no chance to
  reconstruct $P$ from the finite length objects in $\rep(P,\bbK)$.
  For example, the subcategory of finite length objects is semisimple
  when $P=(\bbQ,\le)$. More precisely, $\Ext^1(S_x,S_y)\neq 0$ if and
  only if $y<x$ and there is no $z$ with $y<z<x$.
\end{rem}

\subsection*{Acknowledgements}

I wish to thank Gustavo Jasso for his comments and suggestions
concerning a version of the main theorem for infinite posets. Thanks
also to Dave Benson for suggesting the term `pointwise length
category' and to Maximilian Kaipel for various helpful comments on an
earlier version of this note. The author is grateful to the Max Planck
Institute for Mathematics in Bonn for its hospitality and financial
support. This work was supported by the Deutsche
Forschungsgemeinschaft (SFB-TRR 358/1 2023 - 491392403).

\end{document}